\DeclarePairedDelimiter\floor{\lfloor}{\rfloor}
\newcommand{\N}{\mathbb N}
\newcommand{\B}{\mathcal B}
\newcommand{\be}{{\mathbf e}}
\newcommand {\X} {{\mathbb X}}
\newcommand {\Y} {{\mathbb Y}}
\renewcommand{\phi}{{\varphi}}
\def\supp{\mathop{\rm supp}}
\def\sgn{\mathop{\rm sign}}
\numberwithin{equation}{section}
\newtheorem*{theorem*}{Theorem}
\newtheorem{theorem}{Theorem}[section]
\newtheorem{lemma}[theorem]{Lemma}
\newtheorem{defi}[theorem]{Definition}
\newtheorem{corollary}[theorem]{Corollary}
\newtheorem{Remark}[theorem]{Remark}
\newtheorem{remark}[theorem]{Remark}
\newtheorem{proposition}[theorem]{Proposition}
\newtheorem{question}{Question}
\newtheorem{example}[theorem]{Example}
\newcommand{\Ba}[1]{\begin{array}{#1}}
\newcommand{\Ea}{\end{array}}
\newcommand{\Be}{\begin{equation}}
\newcommand{\Ee}{\end{equation}}
\newcommand{\Bea}{\begin{eqnarray}}
\newcommand{\Eea}{\end{eqnarray}}
\newcommand{\Beas}{\begin{eqnarray*}}
\newcommand{\Eeas}{\end{eqnarray*}}
\newcommand{\Benu}{\begin{enumerate}}
\newcommand{\Eenu}{\end{enumerate}}
\newcommand{\Bi}{\begin{itemize}}
\newcommand{\Ei}{\end{itemize}}
\newcommand{\BR}{\begin{Remark} \em}
\newcommand{\ER}{\end{Remark}}
\newcommand{\BE}{\begin{example} \em}
\newcommand{\EE}{\end{example}}
\newcounter{reg}
\newcounter{regTO}
\newcommand{\bff}{\mathbf 1}
\newcommand{\cupdot}{\mathbin{\mathaccent\cdot\cup}}
\newcommand{\n}{\mathbf{n}}
\newcommand\G{\mathbf{G}}
\newcommand{\K}{\mathbf{K}}
\newcommand{\C}{\mathbf{C}}
\title[Quasi-greedy bases for sequences with gaps]{Quasi-greedy bases for sequences with gaps}
\author[M. Berasategui]{M. Berasategui}
\address{Miguel Berasategui
	\\
	IMAS - UBA - CONICET - Pab I, Facultad de Ciencias Exactas y Naturales \\ Universidad de Buenos Aires \\ (1428), Buenos Aires, Argentina}
\email{mberasategui@dm.uba.ar}
\author[P.\ M. Bern\'a]{P. M. Bern\'a}
\address{Pablo M. Bern\'a\\
	Departamento de Matem\'atica Aplicada y Estad\'istica, Facultad de Ciencias Econ\'omicas y Empresariales, Universidad San Pablo-CEU, CEU Universities\\ Madrid, 28003 Spain.}
\email{pablo.bernalarrosa@ceu.es}
\begin{document}
\subjclass[2010]{41A65, 41A46, 46B15, 46B45.}

\keywords{Non-linear approximation, greedy bases, weak greedy algorithm, quasi-greedy basis.}
\thanks{The first author was supported in part by ANPCyT PICT-2018-04104. The second author was partially supported by the grants MTM-2016-76566-P (MINECO, Spain) and 20906/PI/18 from Fundaci\'on S\'eneca (Regi\'on de Murcia, Spain). 
}
\maketitle
\begin{abstract}
	In this paper, we establish new advances in the theory started by T. Oikhberg in \cite{O2015} where the author joins greedy approximation theory with the use of sequences with gaps. Concretely, we address and partially answer three open questions related to quasi-greedy bases for sequences with gaps posed in \cite[Section 6]{O2015}.
\end{abstract}

\section{Background and organization of the paper}
Since 1999, one of the algorithms that has been studied by different researchers in the area of Non-linear Approximation Theory is the Thresholding Greedy Algorithm (TGA). Basically, if $\mathcal B=(\be_i)_{i\in \N}$ is a basis in a Banach space $\X$ and $x=\sum_{i=1}^{\infty}\be_i^*(x)\be_i$, the algorithm selects the biggest coefficients of $x$ in modulus. The pioneering paper in this topic is \cite{KT}, where S. V. Konyagin and V. N. Temlyakov introduced this algorithm. After that, mathematicians such as N. J. Kalton, S. J. Dilworth, D. Kutzarova, P. Wojtaszczyk, among others,  have studied different convergences of the TGA introducing the so called greedy-type bases.

On the other hand, one important topic in Mathematical Analysis is the study of lacunary sequences (or more generally, sequences with gaps). One  important application of these sequences with gaps is in the study of trigonometric series. For instance, a famous result of A. Zygmund is the following: consider a sequence of positive integers $(n_k)_{k=1}^\infty$ with the Hadamard condition, that is, 
$$n_{k+1}>n_k(1+c),\; c>0.$$
Assume that $\sum_{k=1}^\infty a_k^2$ is a divergent series where $a_k's$ are non-negative real numbers. Then, for any sequence of real numbers $(b_k)_{k=1}^\infty$, the trigonometric series
$$\sum_{k=1}^{\infty}a_k \cos(n_k x + b_k)$$
diverges almost everywhere and also is not a Fourier series (information on the lacunary trigonometric series can be found in \cite{Z}). From the point of view of approximation, for example, S. J. Dilworth and  D. Kutzarova proved in \cite{DK} that there exists a strictly increasing sequence of integers $(n_k)_{k=1}^\infty$ such that the lacunary Haar system $\left((h_j^{n_k})_{j=1}^{2^{n_k}}\right)_{k=0}^\infty$ in $H_1$ is an $f(n)$-approximate $\ell_1$ system (see \cite[Proposition 5.3]{DK} for more details).

Recently, T. Oikhberg in \cite{O2015} linked the world of the TGA with that of sequences with gaps. Concretely, the author defined $\n$-quasi-greedy bases for a strictly increasing sequence of positive integers $\n=(n_k)_{k\in \N}$ and he studied some relations between this notion and the usual notion of quasi-greedy bases introduced in \cite{KT}. As an application of his results, he proved that if the trigonometric basis in $L_p$ ($1<p<\infty$) is $\n$-quasi-greedy, then $p=2$ and, for the Haar system, he proved that in the space $L_1(0,1)$, this basis is not $\n$-quasi-greedy. In a independent way, the idea of using strictly increasing sequences $\n$ with the greedy algorithm has been used in several papers to show the behavior of the so-called Lebesgue-type inequalities for the greedy algorithm (see for instance \cite[Section 5.5]{BBG} and \cite[Section 8.5]{BBGHO}). 

In this paper we address three questions posed in \cite[Section 6]{O2015}. The paper is structured as follows: in Section \ref{intro} we give the main definitions of bases in Banach spaces and introduce the TGA, the WTGA and $\n$-$t$-quasi-greedy bases. In Section \ref{qG}, we recall some of the main results proved by T. Oikhberg, we establish a new result about $\n$-suppression-quasi-greedy bases with constant one, and we give a partial answer to an open question that connects $\n$-$t$-quasi-greedy and $\n$-$s$-quasi-greedy bases for $s,t\in (0,1]$. In Section \ref{adaptative}, we give a negative answer to the following question: given $x$ in a Banach space, is it possible to find a sequence $\n(x)=(n_1(x)<n_2(x)<\cdots)$ such that the TGA converges for this sequence?  In Section \ref{boundedgaps}, for Schauder bases we give the answer to another question posed in \cite{O2015}, related to sequences with bounded gaps. In Section \ref{section6}, we study subsequences of weakly null sequences, and in Section \ref{section7} we extend tha main result of \cite{O2015} to the context of quasi-Banach spaces. Finally, in Section \ref{section8}, we leave some open questions for future research in the area. 

\section{General setting}\label{intro}

Let $\X$ be a separable infinite dimensional Banach $\X$ space over the field  $\mathbb F=\mathbb R$ or $\mathbb C$, with a \textit{semi-normalized Markushevich basis}  $\mathcal B=(\be_i)_{i=1}^\infty$, that is, if $\X^*$ is the dual space of $\X$, $(\be_i)_{i=1}^\infty$ satisfies the following:

	\begin{enumerate}[\upshape (i)]
		\item $\X=\overline{[\be_i \colon i\in\N]}$,
		\item there is a (unique) sequence $(\be_i^*)_{i=1}^{\infty}\subset \X^*$, called  biorthogonal functionals, such that $\be_k^*(\be_i)=\delta_{k,i}$ for all $k,i\in\N$.
		\item  if $\be_i^*(x)=0$ for all $i$, then $x=0$.
		\item There is  a constant $c>0$ such that $\sup_i\lbrace\Vert \be_i\Vert, \Vert \be_i^*\Vert\rbrace\leq c$.
	\end{enumerate}
Additionally, if $\B$ verifies the above conditions and there is $C>0$ such that 
\begin{enumerate}[\upshape (i)]
	\item[(v)] $\Vert S_m(x)\Vert \leq C \Vert x\Vert$, for all $x\in\X$ and all $m\in\N$, where $S_m$ is the $m$th partial sum, we say that $\B$ is a \textit{Schauder basis}. Its \textit{basis constant} $\K$ is the minimum $C$ for which this inequality holds. 
\end{enumerate}
Hence, for a semi-normalized Markushevich basis $\mathcal B$ and $x\in\X$, we have the following formal decomposition $x=\sum_{i=1}^{\infty}\be_i^*(x)\be_i$, where $\lim_{i\rightarrow+\infty}\be_i^*(x)=0$ and the assignment of the coefficients is unique. 

Since 1999, one of the most important algorithms studied in the field of non-linear approximation is the Thresholding Greedy Algorithm (TGA) $(\G_m)_{m=1}^\infty$ introduced by S. V. Konyagin and V. N. Temlyakov in \cite{KT}.  Basically, for an element $x\in\X$, the algorithm selects the biggest coefficients of $x$ in modulus. In this occasion, we work with a relaxed version of the TGA introduced in \cite{Tem}. For a fix number $t\in (0,1]$, we say that a set $A=A(x)$ is a $t$-\textbf{greedy set} for $x\in\mathbb X$ if
$$\min_{i\in A}\vert\be_i^*(x)\vert\geq t\cdot \max_{i\not\in A}\vert\be_i^*(x)\vert.$$
Of course, if $\vert\supp(x)\vert=\infty$, the set $A$ is finite due to the fact that, for every $\varepsilon>0$, the set $\lbrace n\, :\, \vert\be_i^*(x)\vert>\varepsilon\rbrace$ is finite (see \cite{DKO}). A $t$-\textbf{greedy sum} is the projection
$$\mathbf{G}_m^t(x)=\sum_{i\in A}\be_i^*(x)\be_i,$$
where $A$ is a $t$-greedy set of cardinality $m$ for $x\in\X$. The collection $(\mathbf{G}^t_m)_{m=1}^\infty$ is called the \textbf{Weak Thresholding Greedy Algorithm} (WTGA) (see \cite{Tem,Tem2}), and we denote by $\mathcal G_m^t$ the collection of $t$-greedy sums $\G^t_m$ with $m\in\mathbb N$. If $t=1$, we  talk about greedy sets and greedy sums $\G_m$.

Different types of convergence of these algorithms have been studied in several papers, for instance \cite{DKK2003, DKKT, KT}. A central concept in this theory is the notion of quasi-greediness (\cite{KT}), where a basis is quasi-greedy if $\Vert\G_m(x)\Vert\lesssim \Vert x\Vert$, for all $x\in\X$ and for all $m\in\N$. The relation between quasi-greediness and the convergence of the algorithm was given by P. Wojtaszczyk in \cite{Wo}: a basis is quasi-greedy if and only 
\begin{eqnarray}\label{qg}
\lim_n \G_n(x)=x,\; \forall x\in\X.
\end{eqnarray}

Consider now a strictly increasing sequence of positive integers $\mathbf{n}=(n_1<n_2<\cdots)$. T. Oikhberg defined in \cite{O2015} the $\n$-$t$-quasi-greediness as follows: we say that $\mathcal B$ is $\n$-$t$-quasi-greedy if
\begin{eqnarray}\label{qO}
\lim_i \G_{n_i}^t(x)=x,
\end{eqnarray}
for any $x\in\X$ and any choice of $t$-greedy sums $\G_{n_i}^t(x)$.
Of course, in view of the inequalities \eqref{qg} and \eqref{qO}, if the basis is quasi-greedy then the basis is $\n$-quasi-greedy. One of the most important results in \cite{O2015} is the existence of $\n$-$t$-quasi-greedy bases that are not quasi-greedy. In fact, \cite[Proposition 3.1]{O2015} shows that there are such bases for any sequence $\n$ that has arbitrarily gaps, that is,
$$\limsup_i \frac{n_{i+1}}{n_i}=\infty, $$
whereas the question of whether that is possible for sequences that do not have such gaps is left open \cite[Question 1]{O2015}.

Throughout the paper, $\n=(n_i)_{i\in\N}$ denotes a strictly increasing sequence of natural numbers, and $\mathcal B$ denotes a semi-normalized Markushevich basis $(\be_i)_i$ in a Banach space $\X$ with biorthogonal functionals $(\be_i^*)_i$. We also define
$$
\alpha_1:=\sup_{i\in \N}\|\be_i\|\qquad \text{and}\qquad \alpha_2:=\sup_{i\in \N}\|\be_i^*\|.
$$
As usual, by $\supp{(x)}$ we denote the support of $x\in \X$, that is the set $\{i\in \mathbb{N}: \be_i^*(x)\not=0\}$, and $P_A$ with $A$ a finite set denotes the projection operator, that is,
$$P_A(x)=\sum_{i\in A}\be_i^*(x)\be_i, $$ with the convention that any sum over the empty set is zero (note tha if $P_A$ is uniformly bounded, $\B$ is an unconditional basis with constant $\K_s=\sup_{\vert A\vert<\infty}\Vert P_A\Vert$). \\
For $A$ and $B$ subsets of $\mathbb N$, we write $A<B$ to mean that  $\max A<\min B$. If $m\in \mathbb N$, we write $m < A$ and $A < m$ for $\{ m\} < A$ and  
$A <\{ m\}$ respectively (and we use the symbols ``$>$'', ``$\ge$'' and ``$\le$'' similarly). Also, $A\cupdot B$ means the union of $A$ and $B$ with $A\cap B=\emptyset$. Finally, we denote by $\N_{>1}=\N\setminus \lbrace 1\rbrace$. 

\section{$\n$-$t$-quasi-greedy bases}\label{qG}

As P. Wojtaszczyk did for quasi-greedy bases in \cite{Wo}, T. Oikhberg proved the following results connecting the convergence of the WTGA for general sequences $\n$ with the boundedness of the $t$-greedy sums.

\begin{theorem}{\cite[Theorem 2.1]{O2015}}\label{th}
	A basis $\mathcal B$ is $\mathbf{n}$-$t$-quasi-greedy if and only if 
	$$\Vert \mathbf{G}^t_n(x)\Vert\leq C\Vert x\Vert,\; \forall x\in\mathbb X, \forall \G_n^t\in\mathcal G_n^t, \forall n\in\mathbf{n}.$$
\end{theorem}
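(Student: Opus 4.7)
My plan is to follow the template of Wojtaszczyk's characterization of quasi-greedy bases from \cite{Wo}, adapted to the sequence $\n$ and the weak parameter $t\in(0,1]$.

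For $(\Rightarrow)$: assume $\mathcal B$ is $\n$-$t$-quasi-greedy. Then for every fixed $x$ the sequence $\bigl(\G_{n_i}^t(x)\bigr)_i$ converges, hence is uniformly bounded over all choices of $t$-greedy sums at each step. To promote this pointwise boundedness to the uniform estimate one cannot invoke Banach--Steinhaus directly because $\G_n^t$ is positively homogeneous but not linear. Instead I would argue by contradiction via a gliding-hump construction: if the bound failed, pick $\|x_k\|\leq 1$, indices $n_{i_k}\in\n$ and $t$-greedy sums with $\|G_k(x_k)\|\geq 4^k$; truncate each $x_k$ to a finitely supported $\widetilde x_k$ (using that $\be_i^*(x_k)\to 0$) so that the lower bound on the greedy sum survives up to a constant, arrange the supports $\supp\widetilde x_k$ to be pairwise disjoint (by passing to subsequences of the indexing), and set $x:=\sum_k\widetilde x_k/2^k$. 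Suitable $t$-greedy sums of $x$ along $\n$ then isolate the contributions of the $\widetilde x_k$ and fail to be bounded, contradicting what convergence of the algorithm implies for the specific element $x$.

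For $(\Leftarrow)$: assume $\|\G_n^t(x)\|\leq C\|x\|$ for all $x\in\X$, all $t$-greedy sums and all $n\in\n$. Fix $x$ and $\varepsilon>0$. Since $\be_i^*(x)\to 0$, the set $F_\delta:=\{i:|\be_i^*(x)|>\delta\}$ is finite for every $\delta>0$, and the $t$-greedy condition forces $F_\delta\subseteq A$ whenever the cardinality $|A|$ of the greedy set exceeds the finite threshold $|F_{t\delta}|$. Decomposing $\G_n^t(x)=P_{F_\delta}(x)+P_{A\setminus F_\delta}(x)$, one checks that $A\setminus F_\delta$ is itself a $t$-greedy set for $z_\delta:=x-P_{F_\delta}(x)$, so the second summand is a $t$-greedy sum of $z_\delta$; applying the hypothesis reduces the error $\|\G_n^t(x)-x\|$ to a controlled multiple of $\|z_\delta\|$. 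The conclusion $\lim_i\G_{n_i}^t(x)=x$ then comes from making $\|z_\delta\|$ arbitrarily small, which I would extract by approximating $x$ by a finitely supported $y$ via density of $\span\{\be_i\}$, observing that $P_{F_\delta}(x)$ matches $y$ up to a small perturbation once $\delta$ is small enough relative to the coefficients of $y$, and absorbing the perturbation via the uniform greedy bound applied to the $1$-greedy projection $P_{F_\delta}(x)$.

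The main obstacle I expect is the cardinality mismatch: the residual $P_{A\setminus F_\delta}(z_\delta)$ has cardinality $n-|F_\delta|$, which need not lie in $\n$, so the hypothesis does not apply off-the-shelf. I plan to handle this either by adjusting $F_\delta$ (for example by enlarging it to a $1$-greedy set of $x$ whose complement's cardinality lands in $\n$) or by bounding the residue via two applications of the uniform estimate on consecutive members of $\n$. A further delicate point is that, for a general Markushevich basis, $P_F(x)\to x$ along finite sets $F$ is not automatic; it must be squeezed out of the uniform greedy bound itself rather than assumed a priori, and carrying this out carefully is where I anticipate the technical work to concentrate.
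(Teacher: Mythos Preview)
This theorem is quoted from \cite{O2015} and the present paper does not reproduce a proof; the closest is Theorem~\ref{theoremqbextension}, the quasi-Banach extension, where your $(\Leftarrow)$ direction (the implication \eqref{bound}$\Rightarrow$\eqref{ntqg}) is again deferred to Oikhberg's argument, so there is nothing here to compare that half of your sketch against beyond noting that your diagnosis of the cardinality obstacle is correct and that neither proposed patch is actually worked out.

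For $(\Rightarrow)$, however, the paper's proof of \eqref{pointwisebounded}$\Rightarrow$\eqref{boundfinite>m} in Theorem~\ref{theoremqbextension} exposes a genuine gap in your plan: the same cardinality obstruction you flagged for $(\Leftarrow)$ appears here and you have not addressed it. After forming $x=\sum_k \widetilde x_k/2^k$, the set $A_k$ alone is not a $t$-greedy set for $x$ (earlier blocks carry larger coefficients), while $\bigcup_{j<k}\supp\widetilde x_j\cup A_k$ generally has cardinality not in $\mathbf n$. The paper's remedy is a swap: with $B_i$ the union of the first $i$ supports, remove from $A_{i+1}$ a set $C_{i+1}$ of its $|B_i|$ smallest-coefficient indices and put $D_{i+1}:=B_i\cupdot(A_{i+1}\setminus C_{i+1})$, so that $|D_{i+1}|=|A_{i+1}|\in\mathbf n$. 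Verifying that $D_{i+1}$ is $t$-greedy for the limit and that $|A_{i+1}|>|B_i|$ both require building the $x_i$ inductively with quantitative control --- $\|x_{i+1}\|$ small relative to the minimum nonzero coefficient of all earlier blocks, and $\|P_{A_{i+1}}(x_{i+1})\|$ large relative to the size of the accumulated support (conditions \eqref{firstconditionsi}--\eqref{largeprojectioni}); a blanket ``$\|G_k(x_k)\|\ge 4^k$ and pass to a subsequence'' does not secure either.
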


Although T. Oikhberg defined $\n$-$t$-quasi-greedy bases using the condition \eqref{qO}, using the above result, we use the following equivalent definition for our purposes. 
\begin{defi}
	We say that $\mathcal B$ is $\mathbf{n}$-$t$-\textbf{quasi-greedy} (for $t\in (0,1]$) if there exists a positive constant $C$ such that
	\begin{eqnarray}\label{q}
	\Vert \mathbf{G}^t_n(x)\Vert\leq C\Vert x\Vert,\; \forall x\in\mathbb X, \forall \G_n^t\in\mathcal G_n^t, \forall n\in\mathbf{n}.
	\end{eqnarray}
	Alternatively, we say that $\mathcal B$ is $\n$-$t$-\textbf{suppression quasi-greedy} if there exists a positive constant $C$ such that 
	\begin{eqnarray}\label{sq}
	\Vert x-\mathbf{G}^t_n(x)\Vert \leq C\Vert x\Vert,\; \forall x\in\mathbb{X}, \forall \G_n^t\in\mathcal G_n^t, \forall n\in\n.
	\end{eqnarray}
	We denote by $\C_{q,t}$ and $\C_{sq,t}$ the smallest constants verifying \eqref{q} and \eqref{sq}, respectively, and we say that $\mathcal B$ is $\C_{q,t}$-$\n$-$t$-quasi-greedy and $\C_{sq,t}$-$\n$-$t$-suppression-quasi-greedy.
\end{defi}

\begin{remark}
	For $\n=\mathbb N$ and $t=1$, we recover the classical definitions of quasi-greediness (see \cite{KT}).\end{remark}

We will use the following notation for $\n$-$t$-quasi-greedy bases (resp. $\n$-$t$-suppression quasi-greedy bases):
\begin{itemize}
	\item If $\n=\mathbb N$, we say that $\mathcal B$ is $\C_{q,t}$-$t$-quasi-greedy.
	\item If $t=1$, we say that $\mathcal B$ is $\C_{q}$-$\n$-quasi-greedy.
	\item If $t=1$ and $\n=\mathbb N$, we say that $\mathcal B$ is $\C_{q}$-quasi-greedy.
\end{itemize}

In the literature, several papers study greedy-type bases with constant 1 such as \cite{AW, AA2017, DKOSZ}. For instance, in \cite{AA2017}, the authors proved the following result.
\begin{theorem}[\cite{AA2017}]
	Let $\mathbb X$ be a Banach space and $\mathcal B$ a basis in $\mathbb X$. $\mathcal B$ is $1$-unconditional if and only if $\mathcal B$ is $1$-quasi-greedy.
\end{theorem}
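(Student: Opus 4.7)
The forward implication is immediate: if $\mathcal B$ is $1$-unconditional, then every finite coordinate projection is a contraction, and since any greedy sum $\G_m(x)$ equals $P_A(x)$ for the greedy set $A$, we obtain $\|\G_m(x)\| \leq \|x\|$ directly, so $\mathcal B$ is $1$-quasi-greedy.

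For the converse I plan to prove the stronger statement that $\|P_A(x)\| \leq \|x\|$ for every finite $A\subset \N$ and every $x\in\X$. By continuity of each $P_A$ on $\X$ (each $\be_i^*$ is bounded and $A$ is finite) together with the density of finitely supported vectors, it suffices to verify this when $\supp(x)$ is finite. I may further assume $A \subset \supp(x)$ with $\be_i^*(x) \neq 0$ for every $i \in A$, since removing from $A$ any index at which $\be_i^*(x) = 0$ leaves $P_A(x)$ unchanged.

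The heart of the argument is a scaling-and-cancellation trick. Set $B := \supp(x)\setminus A$ (the case $B = \emptyset$ is trivial, as then $P_A(x)=x$) and pick any $M > 1$ satisfying
\[
M\cdot\min_{i\in A}|\be_i^*(x)| \;\geq\; \max_{j\in B}|\be_j^*(x)|.
\]
Form the amplified vector $x_M := M\, P_A(x) + P_B(x)$. By the choice of $M$, the set $A$ is a valid greedy set of $x_M$ of cardinality $|A|$, so $P_A(x_M) = M\,P_A(x)$ is an admissible greedy sum $\G_{|A|}(x_M)$. The $1$-quasi-greediness of $\mathcal B$—which, by Definition 3.2, bounds \emph{every} admissible choice of greedy sum—yields $M\|P_A(x)\| \leq \|x_M\|$.

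To close the loop I rewrite $x_M = (M-1)\,P_A(x) + x$ and apply the triangle inequality, which (using $M-1\geq 0$) gives $\|x_M\| \leq (M-1)\|P_A(x)\| + \|x\|$. Combining with the previous bound,
\[
M\|P_A(x)\| \;\leq\; (M-1)\|P_A(x)\| + \|x\|,
\]
which collapses to exactly $\|P_A(x)\| \leq \|x\|$, as desired. I do not anticipate a serious obstacle; the only point worth double-checking is that $A$ is genuinely a greedy set of $x_M$ of the prescribed cardinality, and this is built into the choice of $M$.
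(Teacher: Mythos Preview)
The paper does not supply a proof of this theorem; it is quoted from \cite{AA2017} as background for the discussion of $\n$-quasi-greedy bases with constant~$1$, so there is nothing in the paper to compare your argument against. Your proof is correct: the amplification $x_M=M\,P_A(x)+P_B(x)$ forces $A$ to be a greedy set for $x_M$, $1$-quasi-greediness then gives $M\|P_A(x)\|\le\|x_M\|$, and rewriting $x_M=(M-1)P_A(x)+x$ together with the triangle inequality collapses this to $\|P_A(x)\|\le\|x\|$. This amplify-and-cancel trick is the standard route to the result and is in the spirit of the argument in \cite{AA2017}.
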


An equivalent result for general sequences $\n$ and $t=1$ is given in \cite{O2015}. 

\begin{proposition}{\cite[Proposition 2.7]{O2015}}
	A basis $\mathcal B$ is $1$-$\n$-quasi-greedy basis if and only if $\mathcal B$ is $1$-suppression-unconditional
\end{proposition}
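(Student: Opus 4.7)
The direction $(\Leftarrow)$ is immediate: if $\mathcal B$ is $1$-suppression unconditional, then for every finitely supported $x$ and every greedy set $A$ of $x$ of size $n\in\n$,
$$
\|\G^1_n(x)\|=\|P_A(x)\|=\|x-P_{\supp(x)\setminus A}(x)\|\le\|x\|.
$$
This extends to arbitrary $x\in\X$ via continuity of $P_A$ and density of finitely supported vectors in $\X$ (a consequence of the totality of the Markushevich basis), so $\mathcal B$ is $1$-$\n$-quasi-greedy.

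For $(\Rightarrow)$, assume $\mathcal B$ is $1$-$\n$-quasi-greedy; by the same density argument it suffices to show $\|P_A(x)\|\le\|x\|$ for every finitely supported $x$ and every finite $A\subseteq\supp(x)$. Set $a_i:=\be_i^*(x)$ and $B:=\supp(x)\setminus A$. The cases $A=\emptyset$ or $B=\emptyset$ being trivial, put $L:=\frac{\min_{i\in A}|a_i|}{\max_{j\in B}|a_j|}$. Pick $n_k\in\n$ with $n_k\ge|A|$ and a set $F$ disjoint from $\supp(x)$ with $|F|=n_k-|A|$. For parameters $\lambda\in(0,\min\{L,1\}]$ and $\varepsilon\ge\lambda\max_{j\in B}|a_j|$, introduce
$$
y_{\lambda,\varepsilon}:=P_A(x)+\lambda P_B(x)+\varepsilon\sum_{l\in F}\be_l.
$$
A coefficient inspection shows that $A\cup F$ is a greedy set of $y_{\lambda,\varepsilon}$ of size $n_k\in\n$, so the hypothesis gives $\|P_A(x)+\varepsilon\sum_{l\in F}\be_l\|\le\|y_{\lambda,\varepsilon}\|$. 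On the other hand, $y_{\lambda,\varepsilon}$ is the convex combination $\lambda(x+\varepsilon\sum_{l\in F}\be_l)+(1-\lambda)(P_A(x)+\varepsilon\sum_{l\in F}\be_l)$, so the triangle inequality and cancellation of the $(1-\lambda)$ term yield $\|P_A(x)+\varepsilon\sum_{l\in F}\be_l\|\le\|x+\varepsilon\sum_{l\in F}\be_l\|$ for every sufficiently small $\varepsilon>0$; sending $\varepsilon\to 0^+$ and invoking continuity of the norm delivers $\|P_A(x)\|\le\|x\|$.

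The main obstacle is finding the right auxiliary element $y_{\lambda,\varepsilon}$. A first attempt to force $A$ into being a greedy set by boosting its coefficients by a large factor yields only $\|P_A(x)+Mu\|\le\|x+Mu\|$ for some $u$ supported in $A$, an inequality that is trivial both in the $M\to\infty$ limit (by homogeneity) and in the $M\to 0$ limit (the greedy hypothesis fails). The construction above sidesteps this obstruction by \emph{shrinking} the $B$-coefficients rather than amplifying the $A$-coefficients: the $\lambda$-dependence then cancels algebraically after the convex-combination step, and only the fresh-support perturbation $\varepsilon\sum_F\be_l$ survives, which can be sent to zero freely. The compatibility condition $\varepsilon\ge\lambda\max_B|a_j|$ is precisely what enables the joint limit $\lambda,\varepsilon\to 0$.
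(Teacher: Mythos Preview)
The paper does not supply its own proof of this proposition; it is quoted from \cite{O2015}. Your argument is correct and self-contained. The convex-combination step is the heart of the matter: once $A\cup F$ is a greedy set of $y_{\lambda,\varepsilon}$ of size $n_k\in\n$, the $1$-$\n$-quasi-greedy hypothesis gives $\|P_A(x)+\varepsilon\sum_{l\in F}\be_l\|\le\|y_{\lambda,\varepsilon}\|$, and writing
\[
y_{\lambda,\varepsilon}=\lambda\Big(x+\varepsilon\sum_{l\in F}\be_l\Big)+(1-\lambda)\Big(P_A(x)+\varepsilon\sum_{l\in F}\be_l\Big)
\]
lets you absorb the $(1-\lambda)$ term into the left-hand side, yielding $\|P_A(x)+\varepsilon\sum_{l\in F}\be_l\|\le\|x+\varepsilon\sum_{l\in F}\be_l\|$ for every $\varepsilon>0$ (just take $\lambda\le\min\{L,1,\varepsilon/\max_{j\in B}|a_j|\}$), after which $\varepsilon\to 0$ concludes. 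One minor remark: in the $(\Leftarrow)$ direction the density reduction is unnecessary, since $1$-suppression unconditionality already asserts $\|P_A(x)\|\le\|x\|$ for every $x\in\X$ and every finite $A$.
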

A consequence of this result is that every $1$-$\n$-$t$-quasi-greedy basis is quasi-greedy. Here, we prove the same consequence for $1$-$\n$-$t$-suppression-quasi-greedy bases.
\begin{theorem}
	Let $\mathcal B$ be a $1$-$\n$-$t$-suppression-quasi-greedy basis. Then, $\mathcal B$ is quasi-greedy. 
\end{theorem}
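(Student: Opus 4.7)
The plan is to show that the $1$-$\n$-$t$-suppression-quasi-greedy hypothesis on $\mathcal B$ forces $\mathcal B$ to be $1$-$\n$-quasi-greedy in the classical ($t=1$) sense, after which the theorem follows by Proposition 2.7 of \cite{O2015}: that proposition gives $1$-suppression-unconditionality, hence unconditionality of $\mathcal B$, hence quasi-greediness.

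First, since any $1$-greedy set is automatically a $t$-greedy set (for $t\le 1$), the hypothesis immediately yields
\[
\|x-\G_n(x)\|\le\|x\|\qquad\text{for all }x\in\X,\;n\in\n,\;\G_n(x)\in\mathcal G_n^1,
\]
so $\mathcal B$ is already $1$-$\n$-suppression-quasi-greedy in the classical sense. The central step is to upgrade this to the corresponding direct bound $\|\G_n(x)\|\le\|x\|$. The idea is to feed back the hypothesis on vectors obtained from $x$ by modulating its projection onto the greedy set $A$: for $|\tau|\ge t$ and $A$ a $1$-greedy set of size $n\in\n$ for $x$, the vector $z_\tau:=\tau\G_n(x)+(x-\G_n(x))$ still has $A$ as a $t$-greedy set of size $n$, since $|\be_i^*(z_\tau)|=|\tau||\be_i^*(x)|$ on $A$ and $|\be_i^*(x)|$ on $A^c$. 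Applying the hypothesis to $z_\tau$ yields the family of inequalities
\[
\|x-\G_n(x)\|\le\bigl\|\tau\G_n(x)+(x-\G_n(x))\bigr\|\qquad(|\tau|\ge t).
\]
In particular, choosing $\tau=-1$ (so that $z_{-1}=2\G_n(x)-x$, which has the same modulus profile as $x$) produces the symmetric companion $\|x-\G_n(x)\|\le\|2\G_n(x)-x\|$. These bounds, combined with the identity $\G_n(x)=\tfrac{1}{2}(x+(2\G_n(x)-x))$, are then to be used to derive $\|\G_n(x)\|\le\|x\|$.

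Once $\mathcal B$ is shown to be $1$-$\n$-quasi-greedy, Proposition 2.7 of \cite{O2015} yields $1$-suppression-unconditionality; a standard argument then shows $\mathcal B$ is unconditional and thus quasi-greedy. The main obstacle is closing the gap from the easy estimate $\|\G_n(x)\|\le 2\|x\|$ (obtained by the triangle inequality applied to the suppression bound) to the sharp constant $\|\G_n(x)\|\le\|x\|$ required to invoke Proposition 2.7. The family of inequalities parametrized by $\tau$, together with analogous perturbations flipping signs on various subsets of $\supp(x)$ (which leave $A$ a $t$-greedy set of size $n\in\n$), provides further constraints on $\G_n(x)$; combining them in an optimal way---most likely via a convexity or extremal argument that exploits the full strength of the constant $1$ in the hypothesis---is the delicate point of the proof, and it is precisely where the hypothesis cannot be relaxed.
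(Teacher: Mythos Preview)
Your proposal has a genuine gap, and you identify it yourself: you do not show how to pass from $\|x-\G_n(x)\|\le\|x\|$ to $\|\G_n(x)\|\le\|x\|$ for $n\in\n$. The family of inequalities you derive, $\|x-\G_n(x)\|\le\|\tau\G_n(x)+(x-\G_n(x))\|$ for $|\tau|\ge t$, only provides \emph{lower} bounds on the norms of perturbations of $x$; none of these, nor the sign-flip variants you mention, yield an upper bound on $\|\G_n(x)\|$ better than the triangle-inequality estimate $2\|x\|$. More to the point, the implication you are aiming for (that $1$-$\n$-suppression-quasi-greedy forces $1$-$\n$-quasi-greedy) would, via Proposition~2.7 of \cite{O2015}, upgrade the conclusion of the theorem to $1$-suppression-unconditionality---strictly stronger than what is being asserted, and something the paper neither claims nor proves. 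Simple finite-dimensional examples already suggest this stronger implication fails: for the standard basis of $\mathbb{R}^3$ with the norm $\|(a,b,c)\|=\max(|a|,|b|,|c|,|a+b+c|)$, every greedy projection of size $2$ satisfies $\|x-P_A(x)\|\le\|x\|$, yet for $x=(1,1,-1)$ and $A=\{1,2\}$ one has $\|P_A(x)\|=2>\|x\|=1$.

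The paper's argument avoids this obstacle entirely by not aiming for constant $1$. It shows directly that $\B$ is $(M+1)$-suppression-quasi-greedy for \emph{all} $m\in\N$, with $M=n_1\alpha_1\alpha_2$. The key step is an inductive reduction phrased as a minimal counterexample: if $A$ is a greedy set for $x$ of size $m>n_1$, split it so that
\[
x-\G_m(x)=\bigl(x-\G_{m-n_1}(x)\bigr)-\G_{n_1}\bigl(x-\G_{m-n_1}(x)\bigr).
\]
Since $n_1\in\n$, applying the constant-$1$ suppression hypothesis to the residual $x-\G_{m-n_1}(x)$ gives $\|x-\G_m(x)\|\le\|x-\G_{m-n_1}(x)\|$. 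Iterating drives $m$ below $n_1$, where the crude bound $\|I-P_A\|\le 1+n_1\alpha_1\alpha_2$ finishes the argument. The constant $1$ in the hypothesis is used precisely to make each peeling step non-expansive, not to reach unconditionality.
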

\begin{proof}
	Since a $1$-$\n$-$t$-suppression-quasi-greedy basis is $1$-$\n$-suppression-quasi-greedy, we assume $t=1$. Let 
	$$M:=n_1 \alpha_1\alpha_2.$$ 
	It is immediate that for all $A\subset \N$ with $|A|\le n_1$, 
	\begin{equation}
	\|P_A\|\le M,\qquad \text{ and } \qquad \|I-P_A\|\le M+1. \label{lessthann1}
	\end{equation}
	We claim that $\B$ is $M+1$-suppression quasi-greedy. Otherwise, there is a minimum $m\in \N$ for which there is $x\in \X$ with $\|x\|=1$ and a greedy sum for $x$ of order $m\in\mathbb N$ such that 
	$$
	\|x-\G_m(x)\|>M+1. 
	$$
	It follows by \eqref{lessthann1} that $m>n_1$. Then 
	$$
	M+1<\|x-\G_m(x)\|=\|x-\G_{m-n_1}(x)-\G_{n_1}(x-\G_{m-n_1}(x))\|\le \|x-\G_{m-n_1}(x)\|,
	$$
	contradicting the minimality of $m$. 
\end{proof}

To close this section, we will discuss the open question \cite[Question 6.4]{O2015}. It follows at once from the definitions that if a basis in $\n$-$t$-quasi-greedy for some $0<t\leq1$, then it is also $\n$-$s$-quasi-greedy for all $t<s\leq 1$. The author asks under which conditions the converse is true. While we do not have a full answer, the next lemma gives sufficient conditions.

\begin{lemma}
	Let $0<s\leq 1$ and let $\n=(n_i)_{i\in\N}$ be a strictly increasing sequence of natural numbers. If $\mathcal B$ is a $\C_{q,s}$-$\n$-$s$-quasi-greedy basis, then, for each $t$ with
	\begin{eqnarray}\label{bound1b}
	s\left(1-\frac{1}{\C_{q,s}}\right)<t<s,
	\end{eqnarray}
	$\mathcal B$ is $\n$-$t$-quasi-greedy with constant
	$$\C_{q,t}\leq \frac{\C_{q,s}t}{s-\C_{q,s}(s-t)}.$$
\end{lemma}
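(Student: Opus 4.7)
The plan is to convert the $t$-greedy situation into an $s$-greedy situation for a suitably rescaled element, so that the hypothesis directly applies. Fix $x \in \X$, $n \in \n$, and a $t$-greedy sum $\G_n^t(x) = P_A(x)$ of order $n$, so that $|A|=n$ and
$$\min_{i \in A} |\be_i^*(x)| \geq t \max_{i \notin A} |\be_i^*(x)|.$$
Set $\mu := t/s \in (0,1)$ and define
$$y := P_A(x) + \mu P_{A^c}(x) = \mu x + (1-\mu) P_A(x).$$
The coefficients of $y$ satisfy $\be_i^*(y)=\be_i^*(x)$ for $i\in A$ and $\be_i^*(y)=\mu\be_i^*(x)$ for $i\notin A$, hence
$$\min_{i\in A}|\be_i^*(y)|\ge t\max_{i\notin A}|\be_i^*(x)|= s\mu\max_{i\notin A}|\be_i^*(x)|= s\max_{i\notin A}|\be_i^*(y)|,$$
so $A$ is an $s$-greedy set of cardinality $n$ for $y$, and $\G_n^s(y):=P_A(y)=P_A(x)=\G_n^t(x)$ is an admissible choice.

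The $\C_{q,s}$-$\n$-$s$-quasi-greedy hypothesis then gives $\|\G_n^t(x)\| = \|\G_n^s(y)\| \le \C_{q,s}\|y\|$. Using $y = \mu x + (1-\mu)\G_n^t(x)$ and the triangle inequality,
$$\|\G_n^t(x)\| \le \C_{q,s}\bigl( \mu\|x\| + (1-\mu)\|\G_n^t(x)\|\bigr).$$
Rearranging and plugging in $\mu = t/s$,
$$\left(1 - \C_{q,s}\frac{s-t}{s}\right)\|\G_n^t(x)\| \le \C_{q,s}\frac{t}{s}\|x\|.$$
The hypothesis $s(1-1/\C_{q,s})<t$ is exactly what makes the bracket on the left strictly positive, namely $s-\C_{q,s}(s-t)>0$, so dividing yields
$$\|\G_n^t(x)\| \le \frac{\C_{q,s}\,t}{s-\C_{q,s}(s-t)}\|x\|,$$
which is the required bound on $\C_{q,t}$.

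There is no real obstacle here beyond spotting the rescaling $\mu=t/s$; the condition \eqref{bound1b} appears naturally and only at the final algebraic step, as the threshold below which the derived estimate loses meaning (the denominator becoming nonpositive). The argument is valid whether or not $A^c$ is empty, and the cardinality $n$ is preserved so the membership $n\in\n$ transfers to the $s$-greedy sum for $y$.
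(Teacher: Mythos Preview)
Your proof is correct and essentially identical to the paper's: both define the same auxiliary element $y = P_A(x) + \tfrac{t}{s}(x - P_A(x)) = \tfrac{t}{s}x + (1-\tfrac{t}{s})P_A(x)$, verify that $A$ is an $s$-greedy set for $y$, apply the $\n$-$s$-quasi-greedy bound, and rearrange using the triangle inequality. The only cosmetic difference is that you write $P_{A^c}(x)$ for $x-P_A(x)$, which is not notation the paper introduces (projections are defined only for finite index sets), but since you immediately rewrite $y$ in the well-defined form $\mu x + (1-\mu)P_A(x)$ there is no actual gap.
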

\begin{proof}
	
	Fix $x\in \mathbb{X}$, $n\in \mathbf{n}$ and $A$ a $t$-greedy set for $x$ of cardinality $n$. Let 
	$$
	y:=P_{A}(x)+ \frac{t}{s}(x-P_{A}(x)).
	$$
	For each $i\in A$ and each $j\not \in A$, we have
	$$
	|\be_i^*(y)|=|\be_i^{*}(x)|\ge t |\be_j^{*}(x)|= s|\be_j^{*}(y)|.
	$$
	Thus, $A$ is an $s$-greedy set for $y$ of cardinality $n$. It follows that 
	$$
	\|P_{A}(x)\|=\|P_{A}(y)\|\le \C_{q,s}\|y\|\le  \C_{q,s} \left(\frac{t}{s}\|x\|+ (1-\frac{t}{s})\|P_{A}(x)\|\right).
	$$
	By \eqref{bound1b},
	$$
	1- \C_{q,s} \left(1-\frac{t}{s}\right)>0, 
	$$
	so this gives
	$$
	\|P_{A}(x)\|\le \frac{ \C_{q,s} t}{s- \C_{q,s}(s-t)}\|x\|.
	$$
\end{proof}

\section{Adaptive gaps for Markushevich bases.}\label{adaptative}
In this section, we address \cite[Question 6.5]{O2015} regarding adaptive gaps. Concretely, we give a negative answer to this question by means of a new example. The question is as follows:

\begin{center}
	For a Markushevich basis $\B$ in a Banach space $\X$ and $x\in\mathbb X$, can we find a sequence $\n(x)=(n_1(x)<n_2(x)<...)\subset\mathbb N$ so that the sequence $\lim_k \G_{n_k(x)}(x)=x$?
\end{center}

The following example shows that it is not possible, in general, to find this sequence since we find an element in a Banach space such that \eqref{qO} does not hold for any sequence $\n$.
\begin{example}\label{ex:question 5}
	Let $\mathbb{X}$ be the completion of $\mathtt{c}_{00}$ under the norm
	$$
	\Vert \mathbf (a_i)_i\Vert:= \sup_{n\geq 1}\Big\vert \sum_{j=1}^n a_j\Big\vert,
	$$
and let $\mathcal B=(\be_i)_{i\in\N}$ the canonical basis. There is $y\in \mathbb{X}$ with the property that for every $0<t\le 1$,
	\begin{equation}
	\inf_{\G_m^t(y)\in \mathcal G_m^t(y)}\|\G_m^t(y)\|\xrightarrow[m\to +\infty]{}  +\infty. \label{toinfinity}
	\end{equation}
	In particular, taking $t=1$ it follows that for any strictly increasing sequence of integers $\{m_j\}_j$ and every sequence of greedy sums $\{\G_{m_j}(y)\}_j$, we have
	$$
	\|\G_{m_j}(y)-y\|\xrightarrow[j\to +\infty]{}  +\infty. 
	$$
\end{example}
\begin{proof}
	First, note that the canonical basis $(\be_i)_i$ is a monotone normalized Schauder basis for $\X$, and for every scalar sequence $(a_i)_i$, there is $x\in \X$ such that 
	$$
	x=\sum\limits_{i=1}^{\infty}a_i\be_i
	$$
	if and only if the scalar series $\sum\limits_{i=1}^{\infty}a_i$ converges.   \\
	Next, define a sequence of positive integers $(n_k)_k$ and a sequence of finite sets of positive integers $(A_k)_k$ as follows: $n_1:=1$, and for all $k\in \N$, 
	$$
	n_{k+1}:=n_k+10^{k}+1, \qquad A_k:=\{n_{k}+1,\dots, n_k+10^{k}\}.
	$$
	It is clear that $n_k<A_k<n_{k+1}$, $|A_k|=10^{k}$ for all $k$, and
	$$
	\N=\bigcup_{k}\{n_k\}\bigcup_{k}A_k
	$$
	Now define a scalar sequence $(b_n)_n$ as follows: 
	\begin{numcases}{b_n:=}
	\frac{1}{\sqrt{k}} & if $n=n_k$,\nonumber\\
	-\frac{1}{10^{k}\sqrt{k}} & if $n\in A_k$. \nonumber
	\end{numcases}
	With these definitions (as usual, the empty sum is zero), it follows that for all $k\in \N$,
	\begin{eqnarray}
	\sum\limits_{1\le n<n_k}b_n&=&\sum\limits_{1\le j<k}b_{n_j}+\sum\limits_{1\le j<k}\sum\limits_{i\in A_j}b_i=	\sum_{1\le j<k}\frac{1}{\sqrt{j}}+\sum_{1\le j<k}\sum_{i\in A_j}\frac{(-1)}{10^j\sqrt{j}}\nonumber\\
	&=&\sum_{1\le j<k}\frac{1}{\sqrt{j}}-\sum_{1\le j<k}\frac{1}{\sqrt{j}}=0.
	\end{eqnarray}
	Thus, 
	\begin{eqnarray}
	\sum\limits_{n=1}^{n_k}b_n=b_{n_k}=\frac{1}{\sqrt{k}},\label{oneofthen_k}
	\end{eqnarray}
	and for $l\in A_k$,
	\begin{eqnarray}
	\left|\sum\limits_{n=1}^{l}b_n\right|=\left|\sum\limits_{n=1}^{n_k}b_n+\sum\limits_{\substack{n\in A_k\\ n\le l}}b_n\right|=\left|\frac{1}{\sqrt{k}}-\frac{1}{10^{k}\sqrt{k}}|\{n\in A_k: n\le l\}|\right|< \frac{1}{\sqrt{k}}.\label{inAk}
	\end{eqnarray}
It follows from this that $\sum\limits_{i=1}^{\infty}b_i=0$, so there is $y\in \X$ given by 
	$$y:=\sum\limits_{i=1}^{\infty}b_i\be_i,$$
	and \eqref{oneofthen_k} and \eqref{inAk} entail that $\|y\|=1$. 
	We claim that \eqref{toinfinity} holds. To prove our claim, suppose it is false. Then, for some $0<t\le 1$, there is $M>0$ and a strictly increasing sequence of integers $\{m_j\}_j$ such that 
	\begin{eqnarray*}
	\inf_{\G_{m_j}^t(y)\in \mathcal{G}_{m_j}^t(y)}\|\G_{m_j}^t(y)\|<M. 	
	\end{eqnarray*}
	Choose a sequence of $t$-greedy sums $\{\G_{m_j}^t(y)\}_j$ so that 
	\begin{equation}
	\|\G_{m_j}^t(y)\|<M \label{bounded}
	\end{equation}
	for all $j$. For each $j\in \N$, the support of $\G_{m_j}^t(y)$ is a $t$-greedy set, say $\Lambda_{m_j}^t(y)$. Define a function $\phi:\N\rightarrow \N$ by 
	$$
	\phi(j):=\min\{k\in \N: n_k\not \in \Lambda_{m_j}^{t}(y)\}. 
	$$
	By definition, for every $j\in \N$ and every $k\le \phi(j)-1$, $n_k\in \Lambda_{m_j}^t(y)$. Hence, 
	\begin{eqnarray}
	\|\G_{m_j}^t(y)\|&=&\sup_{i\ge 1}\left|\sum\limits_{l=1}^{i}\be_l^*(\G_{m_j}^t(y))\right|\ge\left|\sum\limits_{i=1}^{\max\{\Lambda_{m_j}^t(y)\}}\be_i^*(\G_{m_j}^t(y))\right|=\left|\sum\limits_{i\in \Lambda_{m_j}^t(y)}\be_i^{*}(y)\right|\nonumber\\
	&\ge& \sum\limits_{i\in \Lambda_{m_j}^t(y)}\be_i^{*}(y)=\sum\limits_{\substack{k\in \N\\n_k\in  \Lambda_{m_j}^t(y)}}\frac{1}{\sqrt{k}} +\sum_{\substack{k\in\N\\A_k\cap \Lambda_{m_j}^t(y)\not=\emptyset}}\sum\limits_{i\in A_k\cap \Lambda_{m_j}^t(y)} \be_i^*(y)\nonumber\\
	&\ge& \sum\limits_{k=1}^{\phi(j)-1}\frac{1}{\sqrt{k}}+\sum_{\substack{k\in\N\\A_k\cap \Lambda_{m_j}^t(y)\not=\emptyset}}\sum\limits_{i\in A_k\cap \Lambda_{m_j}^t(y)} \be_i^*(y)\nonumber\\
	&=&\sum\limits_{k=1}^{\phi(j)-1}\frac{1}{\sqrt{k}}-\sum_{\substack{k\in\N\\A_k\cap \Lambda_{m_j}^t(y)\not=\emptyset}}\frac{|A_k\cap \Lambda_{m_j}^t(y)|}{10^k\sqrt{k}}\nonumber\\
	&\ge& \sum\limits_{k=1}^{\phi(j)-1}\frac{1}{\sqrt{k}}-\sum_{\substack{k\in\N\\A_k\cap \Lambda_{m_j}^t(y)\not=\emptyset}}\frac{1}{\sqrt{k}}.\label{intermediate}
	\end{eqnarray}
	For each $j\in \N$, $n_{\phi(j)}\not \in \Lambda_{m_j}^{t}(y)$. Thus, the $t$-greedy condition implies that for every $n\in \Lambda_{m_j}^{t}(y)$,
	\begin{equation}
	|\be_i^*(y)|\ge t |\be_{n_{\phi(j)}}^*(y)|=\frac{t}{\sqrt{\phi(j)}}.\label{boundforpsi}
	\end{equation}
	This implies that for all $k,j\in \N$, if $\Lambda_{m_j}^t(y)\cap A_k\not=\emptyset$, then for each $n\in \Lambda_{m_j}^t(y)\cap A_k$, 
	\begin{equation}
	\frac{1}{10^k}\ge \frac{1}{10^k\sqrt{k}}=|\be_n^*(y)|\ge \frac{t}{\sqrt{\phi(j)}}, \nonumber
	\end{equation}
	so
	\begin{equation*}
	10^k\le \frac{\sqrt{\phi(j)}}{t}.
	\end{equation*}
	Hence, 
	$$
	k\le \log_{10} \frac{\sqrt{\phi(j)}}{t},
	$$
	or (equivalently since $k\in \N$),
	\begin{equation}
	k\le  \floor*{ \log_{10}\frac{\sqrt{\phi(j)}}{t}}.\label{boundfortheAksinLambdam}
	\end{equation}
	Thus, for all $j\in \N$,
	$$
	\sum_{\substack{k\in\N\\A_k\cap \Lambda_{m_j}^t(y)\not=\emptyset}}\frac{1}{\sqrt{k}}\le \sum\limits_{k=1}^{\floor*{\log_{10} \frac{\sqrt{\phi(j)}}{t}}}\frac{1}{\sqrt{k}}.
	$$
	From this and \eqref{intermediate} it follows that 
	\begin{equation}
	\|\G_{m_j}^t(y)\|\ge \sum\limits_{k=1}^{\phi(j)-1}\frac{1}{\sqrt{k}}-\sum\limits_{k=1}^{\floor*{\log_{10} \frac{\sqrt{\phi(j)}}{t}}}\frac{1}{\sqrt{k}}\label{intermediate2}
	\end{equation}
	for all $j\in\N$. \\
	Now since $(\be_i^*)_i$ is $w^*$-null, $t$ is fixed, and for every $j\in \N$ there is $n\in \Lambda_{m_j}^t(y)\cap [m_j,+\infty)$, \eqref{boundforpsi} implies that
	\begin{equation}
	\phi(j)\xrightarrow [j\to +\infty]{}+\infty. \nonumber%\label{infinity}
	\end{equation}
	Therefore, there is $j_0$ such that for all $j\ge j_0$,
	\begin{equation}
	\sqrt{\phi(j)-1}-\sqrt{\floor*{\log_{10} \frac{\sqrt{\phi(j)}}{t}}+1}>\frac{M}{2}.  \label{sufficientlylarge}
	\end{equation}
	It follows from \eqref{intermediate2} and \eqref{sufficientlylarge} that for all $j\ge j_0$,
	\begin{eqnarray*}
		\|\G_{m_j}^t(y)\|&\ge& \sum\limits_{\floor*{\log_{10} \frac{\sqrt{\phi(j)}}{t}}+1}^{\phi(j)-1}\frac{1}{\sqrt{k}}\ge \int_{\floor*{\log_{10} \frac{\sqrt{\phi(j)}}{t}}+1}^{\phi(j)-1}\frac{1}{\sqrt{t}}dt\\
		&=&2\left(\sqrt{\phi(j)-1}-\sqrt{\floor*{\log_{10} \frac{\sqrt{\phi(j)}}{t}}+1}\right)>M. 
	\end{eqnarray*}
	As this contradicts \eqref{bounded}, the proof is complete. 
\end{proof}

\section{$\n$-$t$-quasi-greedy bases for sequences with bounded gaps}\label{boundedgaps}
In \cite[Proposition 3.1]{O2015}, it is proven that for every sequence $\mathbf{n}=(n_i)_{i\in\N}$ with arbitrarily large gaps, there is a space $\mathbb{X}$ with a (Schauder) basis $\mathcal B$ such that $\B$ is $\mathbf{n}$-$t$-quasi-greedy for all $0<t\le 1$, but not quasi-greedy. Recall that $\n$ has arbitrarily large gaps if
$$\limsup_i \frac{n_{i+1}}{n_i}=\infty.$$
An open question posed in the same paper, concretely \cite[Question 6.1]{O2015}, says the following:
\begin{center}
	Are there $\mathbf{n}$-quasi-greedy but not quasi-greedy bases such that $\n$ does not have arbitrarily large gaps?
\end{center}
To show that the answer of this question is negative for Schauder bases, we use the following definition.
\begin{defi}
	Let $\n=(n_k)_{k\in \N}$ be a strictly increasing sequence of natural numbers, and let $l\in\mathbb N_{>1}$. We say that $\n$ has $l$-bounded gaps if
	$$\frac{n_{i+1}}{n_i}\leq l,$$
	for all $i\in\mathbb N$, and we say that it has bounded gaps if it has $l$-bounded gaps for some $l\in\mathbb N_{>1}$.
\end{defi}

\begin{theorem}\label{theoremboundedgapsv2} Let $\mathbf{n}=(n_k)_k$ be a strictly increasing sequence of natural numbers and $0<t\le 1$. Suppose that $\mathcal B$ is a $\C_{q,t}$-$t$-$\mathbf{n}$-quasi-greedy Schauder basis with basis constant $\K$. For every $k\in \N$, $l\in \N_{>1}$, $x\in \X$ and $A$ any $t$-greedy set for $x$ with $n_k\le |A|< l\cdot n_k$, we have 
	\begin{equation}
	\|P_{A}(x)\|\le 2\C_{q,t}\K(l-1+\K)\|x\|.\label{largercardinals2}
	\end{equation}
	In particular, if $\mathbf{n}$ has bounded gaps, the basis is quasi-greedy. 
\end{theorem}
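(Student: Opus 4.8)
The plan is to work directly with the defining uniform bound \eqref{q} for cardinalities in $\n$ together with the Schauder estimate $\|S_m\|\le\K$, and to split a general $t$-greedy set of "intermediate'' cardinality into blocks of size exactly $n_k$ plus one short tail. Fix $x\in\X$, $l\in\N_{>1}$ and a $t$-greedy set $A$ with $n_k\le|A|=:m<ln_k$; enumerate $A=\{a_1<\cdots<a_m\}$, put $N:=a_m$, $a_0:=0$ and $\mu:=\min_{i\in A}|\be_i^*(x)|$, so the $t$-greedy condition reads $|\be_j^*(x)|\le\mu/t$ for every $j\notin A$. Write $m=qn_k+r$ with $0\le r<n_k$; from $n_k\le m<ln_k$ we get $1\le q\le l-1$. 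Set $N_i:=a_{m-in_k}$ for $i=0,\dots,q$, $N_{q+1}:=0$ and $C_i:=A\cap(N_i,N_{i-1}]$ for $i=1,\dots,q+1$; then $|C_i|=n_k$ for $i\le q$, the tail $C_{q+1}=A\cap[1,a_r]$ has $|C_{q+1}|=r<n_k$, and since $P_A(S_M(x))=P_{A\cap[1,M]}(x)$ the telescoping identity
$$
P_A(x)=P_A(S_N(x))=\sum_{i=1}^{q+1}\bigl(P_A(S_{N_{i-1}}(x))-P_A(S_{N_i}(x))\bigr)=\sum_{i=1}^{q+1}P_{C_i}(x)
$$
holds. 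It therefore suffices to estimate each $\|P_{C_i}(x)\|$.

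The heart of the argument is that each full block $C_i$ ($i\le q$) is a genuine $t$-greedy set of cardinality $n_k\in\n$ for a suitable \emph{difference of two partial sums of $x$}, so that \eqref{q} applies to it. Put $v_i:=S_{N_{i-1}}(x)-S_{N_i}(x)$, so $\be_p^*(v_i)=\be_p^*(x)$ for $p\in(N_i,N_{i-1}]$ and $\be_p^*(v_i)=0$ otherwise, whence $\|v_i\|\le 2\K\|x\|$. For $p\in C_i\subseteq A$ one has $|\be_p^*(v_i)|=|\be_p^*(x)|\ge\mu$, while for $p\notin C_i$ either $\be_p^*(v_i)=0$ or $p\in(N_i,N_{i-1}]\setminus A$ and then $|\be_p^*(v_i)|=|\be_p^*(x)|\le\mu/t$; hence $\min_{p\in C_i}|\be_p^*(v_i)|\ge t\max_{p\notin C_i}|\be_p^*(v_i)|$, i.e.\ $C_i$ is a $t$-greedy set for $v_i$ of cardinality $n_k$. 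Since $C_i\subseteq(N_i,N_{i-1}]$, where $v_i$ and $x$ agree coordinatewise, $P_{C_i}(x)=P_{C_i}(v_i)\in\mathcal G_{n_k}^t(v_i)$, and \eqref{q} gives $\|P_{C_i}(x)\|=\|P_{C_i}(v_i)\|\le\C_{q,t}\|v_i\|\le 2\C_{q,t}\K\|x\|$. Summing over $i=1,\dots,q$ and using $q\le l-1$,
$$
\sum_{i=1}^{q}\|P_{C_i}(x)\|\le 2\C_{q,t}\K(l-1)\|x\|.
$$

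It remains to bound the tail $\|P_{C_{q+1}}(x)\|=\|P_L(x)\|$ with $L:=A\cap[1,a_r]$ by $2\C_{q,t}\K^2\|x\|$, and this is the step I expect to be the main obstacle, because $|L|=r<n_k$ blocks a direct use of \eqref{q}. The computation above shows $L$ is a $t$-greedy set for $u:=S_{a_r}(x)$, with $\|u\|\le\K\|x\|$ and $P_L(x)=P_L(u)$. The idea is to enlarge $L$, inside an initial segment of $\supp(u)$, by adjoining the largest remaining coordinates of $u$ until one obtains a $t$-greedy set $\widehat L\supseteq L$ of cardinality exactly $n_k$; then \eqref{q} controls $\|P_{\widehat L}(u')\|$ for the relevant truncation $u'$ of $u$ (with $\|u'\|\le\K\|u\|$), and a careful accounting of the correction $P_{\widehat L\setminus L}(u')$ folds back into the same scheme, producing the extra factor $\K$ and hence $\|P_L(x)\|=\|P_L(u)\|\le 2\C_{q,t}\K\|u\|\le 2\C_{q,t}\K^2\|x\|$. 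The delicate point is verifying that such an enlargement can always be performed — in particular dealing with the degenerate case in which the relevant truncation of $u$ has fewer than $n_k$ nonzero coordinates, where the enlargement is vacuous and a separate, more elementary estimate must be supplied. Granting this and adding the two bounds gives \eqref{largercardinals2}.

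For the last assertion, assume $\n$ has $l$-bounded gaps. Given $m\in\N$ and a $t$-greedy set $A$ with $|A|=m$: if $m\ge n_1$, let $k$ be largest with $n_k\le m$; then $m<n_{k+1}\le ln_k$, so $n_k\le|A|<ln_k$ and \eqref{largercardinals2} gives $\|P_A(x)\|\le 2\C_{q,t}\K(l-1+\K)\|x\|$; if $m<n_1$, then trivially $\|P_A(x)\|\le\sum_{i\in A}\|\be_i^*\|\,\|\be_i\|\,\|x\|\le n_1\alpha_1\alpha_2\|x\|$. Since every $1$-greedy set is in particular a $t$-greedy set, it follows that $\sup_m\sup_{\G_m}\|\G_m(x)\|\le C\|x\|$ for all $x\in\X$, i.e.\ $\mathcal B$ is quasi-greedy.
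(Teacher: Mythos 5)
Your decomposition of $A$ into $q\le l-1$ full blocks of cardinality $n_k$ plus one short bottom block, and your treatment of the full blocks via the interval projections $v_i=S_{N_{i-1}}(x)-S_{N_i}(x)$ (each $C_i$ being a $t$-greedy set of cardinality $n_k\in\n$ for $v_i$, so \eqref{q} applies and gives $2\C_{q,t}\K\|x\|$ per block), is correct and is essentially the argument in the paper. The genuine gap is exactly the step you flag and then ``grant'': the block $L=A\cap[1,a_r]$ with $|L|=r<n_k$ is never actually estimated. Moreover, the enlargement you sketch — adjoining the largest remaining coordinates of $u=S_{a_r}(x)$ inside an initial segment of $\supp(u)$ — does not work as stated. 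First, there may be no $n_k-r$ further indices available below $a_r$ at all (for instance when $A\supseteq\{1,\dots,a_r\}$), and this is not a marginal degeneracy: in that case your ``separate, more elementary estimate'' would have to bound a projection onto up to $n_k-1$ coordinates with a constant independent of $k$, which is precisely what is not available. Second, even when $\widehat L$ exists, the adjoined indices $\widehat L\setminus L$ interleave with $L$, so no partial-sum operator isolates $P_L$ from $P_{\widehat L}$, and there is no a priori control on $\|P_{\widehat L\setminus L}(u)\|$; the assertion that this correction ``folds back into the same scheme'' is exactly the point requiring proof.

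The paper closes this step by enlarging $L$ to the \emph{right} with elements of $A$ itself, and you should do the same (assume $r\ge 1$, else there is nothing to prove): let $\widetilde L$ consist of the first $n_k-r$ elements of $A\setminus L$, which exist because $|A|\ge n_k$, and note $L<\widetilde L$ since $A\setminus L>a_r$. Then $P_L(x)=S_{a_r}\bigl(P_{L\cup\widetilde L}(x)\bigr)$, so $\|P_L(x)\|\le \K\|P_{L\cup\widetilde L}(x)\|$. Setting $B:=\{\min L,\dots,\max\widetilde L\}$, every index of $B\setminus(L\cup\widetilde L)$ lies outside $A$, so all its coefficients are at most $\mu/t$ while those of $L\cup\widetilde L\subseteq A$ are at least $\mu$; hence $L\cup\widetilde L$ is a $t$-greedy set of cardinality $n_k\in\n$ for $P_B(x)$, and since $B$ is an interval, $\|P_{L\cup\widetilde L}(x)\|\le \C_{q,t}\|P_B(x)\|\le 2\C_{q,t}\K\|x\|$. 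This yields $\|P_L(x)\|\le 2\C_{q,t}\K^2\|x\|$, which together with your block estimates gives \eqref{largercardinals2}. With this replacement your argument coincides with the paper's; the final deduction of quasi-greediness under bounded gaps is fine as written.
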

\begin{proof}
	To prove \eqref{largercardinals2}, we assume without loss of generality that $n_k<|A|<ln_k$. Thus, there is $2\le j\le l$ and a partition of $A$ into $j$ disjoint nonempty sets $(A_i)_{1\le i\le j}$ with the following properties:  
	\begin{equation}
	|A_1|\le n_{k}, \qquad |A_i|=n_{k}\;\forall 2\le i\le j,\qquad A_i<A_{i+1}\;\forall 1\le i\le j-1. \label{firstsets2}
	\end{equation}
	For each $2\le i\le j$, define 
	$$
	B_i:=\{\min{A_i},\dots, \max{A_i} \},
	$$
	Observe that for each $2\le i\le j$, $A_i$ is a $t$-greedy set for $P_{B_i}(x)$ of cardinality $n_k$. Hence, 
	$$
	\|P_{A_i}(x)\|=\|P_{A_i}(P_{B_i}(x))\|\le \C_{q,t}\|P_{B_i}(x)\|. 
	$$
	Now for each $2\le i\le j$, $B_i$ is an interval of positive integers.  Thus, the Schauder condition implies that 
	$$
	\|P_{B_i}(x)\|\le 2\K\|x\|. 
	$$
	Consequently, we have
	\begin{equation}
	\|P_{A_i}(x)\|\le 2\C_{q,t}\K\|x\|.\label{fori>12}
	\end{equation}
	for all $2\le i\le j$. If $|A_1|=n_{k}$, then the same argument gives that \eqref{fori>12} holds also for $i=1$. If $0<|A_1|<n_{k}$, let $\widetilde{A}_1$ be the set consisting of the first $n_{k}-|A_1|$ elements of $A\setminus A_1$. As $A_1<\widetilde{A}_1$, the Schauder condition entails that
	\begin{equation}
	\|P_{A_1}(x)\|\le \K\|P_{A_1 \cup \widetilde{A}_1}(x)\|.\label{A12}
	\end{equation}
	Define
	$$
	B_1:=\{\min{A_1},\dots, \max{\widetilde{A}_1} \}.
	$$
	It is clear that $B_1=\{\min{A_1\cup \widetilde{A}_1},\dots, \max{A_1 \cup \widetilde{A}_1} \}$. Since $A_1 \cup \widetilde{A}_1$ is a $t$-greedy set for $P_{B_1}(x)$ of cardinality $n_k$, we have
	\begin{equation}
	\|P_{A_1 \cup \widetilde{A}_1}(x)\|\le \C_{q,t}\|P_{B_1}(x)\|. \label{B12}
	\end{equation}
	Finally, the fact that $B_1$ is an interval of positive integers entails that 
	$$
	\|P_{B_1}(x)\|\le 2\K\|x\|.
	$$
	This bound, when combined with \eqref{A12} and \eqref{B12}, gives that 
	$$
	\|P_{A_1}(x)\|\le 2\C_{q,t}\K^2\|x\|. 
	$$
	From this result and \eqref{fori>12}, it follows that 
	\begin{equation}
	\|P_A(x)\|\le \sum\limits_{i=1}^{j}\|P_{A_i}(x)\|\le ((j-1)2\C_{q,t}\K+2C_{q,t}\K^2)\|x\|\le 2\C_{q,t}\K(l-1+\K)\|x\|, \nonumber
	\end{equation}
	so the proof of \eqref{largercardinals2} is complete. \\
	Suppose now that $\mathbf{n}$ has $l$-bounded gaps, and fix $x\in \X$, and $A$ a greedy set for $x$. We may assume without loss of generality that $|A|\not \in \mathbf{n}$ (else, there is nothing to prove). If $|A|<n_1$, then
	\begin{equation}
	\|P_A(x)\|\le \sum\limits_{i\in A}|\be_i^*(x)|\|\be_i\|\le n_1\alpha_1\alpha_2 \|x\|. \label{smallcardinal0}
	\end{equation}
	Otherwise, let 
	$$
	k_0:=\max_{k\in \N}\{n_k<|A|\}. 
	$$
	As $\mathbf{n}=(n_k)_k$ has $l$-bounded gaps and $|A|\not \in \mathbf{n}$, we have $n_{k_0}<|A|<l n_{k_0}$. Hence, by \eqref{largercardinals2}, 
	$$
	\|P_A(x)\|\le 2\C_{q,t} \K(l-1+\K)\|x\|.
	$$
	From this and \eqref{smallcardinal0} it follows that $\B$ is $\mathbf C$-quasi-greedy with 
	$$
	\mathbf C\le \max\{n_1\alpha_1\alpha_2, 2\C_{q,t} \K(l-1+\K)\}.
	$$
\end{proof}

\section{Subsequences of weakly null sequences.}\label{section6}

In \cite[Theorem 5.4]{DKK2003}, it was proven that every weakly null semi-normalized sequence with a spreading model not equivalent to the unit vector basis of $\mathtt{c_0}$, has a quasi-greedy (in fact, almost greedy) basic subsequence. The question of whether every weakly null semi-normalized sequence has a quasi-greedy basic subsequence remains open, but it was proven in \cite[Corollary 4.6]{DKSWo2012} that if the sequence is a branch quasi-greedy basis, it has a quasi-greedy subsequence. Using  \cite[Theorem 5.4]{DKK2003}, we can prove a similar result for $\mathbf{n}$-quasi-greedy bases. It is known that every weakly null semi-normalized sequence has a subsequence with a $1$-suppression unconditional spreading model (see for instance \cite{AK2006}). Thus, by \cite[Theorem 5.4]{DKK2003} we only need to consider cases in which the spreading model is equivalent to the unit vector basis of $\mathtt{c}_0$. It turns out that in such cases, an $\mathbf{n}$-quasi-greedy basis is already quasi-greedy.

\begin{proposition}\label{propositionlikec0}Assume that $\B$  is a $\C_q$-$\mathbf{n}$-quasi-greedy basis, and $(\be_{i_k})_k$ is a subsequence with a spreading model $(y_i)_i$ that is $M$-equivalent to the unit vector basis of $\mathtt{c}_0$. Then $\B$ is $K_q$-quasi-greedy with 
	\begin{equation*}
	K_q\le \C_q+(\C_q+1)\alpha_2M.
	\end{equation*}
\end{proposition}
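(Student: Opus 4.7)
The plan is to reduce estimating $\|\G_m(x)\|$ for an arbitrary $m\in\N$ to the case $m\in\n$ by ``padding'' $x$ with a block drawn from the far tail of the subsequence $(\be_{i_k})_k$. Because this subsequence has a $c_0$-spreading model, long blocks from its tail behave essentially like an $\ell_\infty$-basis: $\|\sum_{i\in B}\be_i\|$ is bounded uniformly by roughly $M$, independently of $|B|$. This is exactly what lets us absorb a greedy set of any size $m$ into a greedy set of a size in $\n$ at bounded cost.

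Assume $\|x\|=1$ and fix a greedy set $A$ for $x$ of cardinality $m$. If $m\in\n$ we are done, so suppose $m\notin\n$ and set $r:=\min\{n\in\n:n>m\}-m$. For any $\varepsilon>0$, the spreading-model asymptotics together with $\be_i^*(x)\to 0$ let us choose indices $j_1<\cdots<j_r$, all sufficiently large, so that the set $B:=\{i_{j_1},\ldots,i_{j_r}\}$ satisfies: $B\cap A=\emptyset$; $\max_{i\in B}|\be_i^*(x)|<\varepsilon$; $\|\sum_{i\in B}\be_i\|\le M+\varepsilon$; and $\|P_B(x)\|\le(M+\varepsilon)\varepsilon$ (the last obtained by applying the spreading-model bound to the scalars $(\be_i^*(x))_{i\in B}$). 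Set $c:=\alpha_2+2\varepsilon$ and $\tilde{x}:=x+c\sum_{i\in B}\be_i$. Then $A\cup B$ is a greedy set of cardinality $m+r\in\n$ for $\tilde{x}$: for $i\in B$ one has $|\be_i^*(\tilde{x})|\ge c-\varepsilon>\alpha_2\ge|\be_j^*(x)|$ for every $j$, while for $i\in A$ the greediness inequality is inherited from $A$ being greedy for $x$.

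Applying $\n$-quasi-greediness to $\tilde{x}$ gives $\|P_{A\cup B}(\tilde{x})\|\le\C_q\|\tilde{x}\|\le\C_q(1+c(M+\varepsilon))$. Since $A\cap B=\emptyset$,
\[P_{A\cup B}(\tilde{x})=P_A(x)+P_B(x)+c\sum_{i\in B}\be_i,\]
so the triangle inequality yields
\[\|\G_m(x)\|=\|P_A(x)\|\le\C_q(1+c(M+\varepsilon))+(M+\varepsilon)\varepsilon+c(M+\varepsilon).\]
Letting $\varepsilon\to 0$ (so $c\to\alpha_2$) produces the claimed bound $\|\G_m(x)\|\le\C_q+(\C_q+1)\alpha_2 M$, and homogeneity in $x$ finishes the proof. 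The main obstacle is arranging the four tail conditions on $B$ simultaneously; each is routine in isolation, but the spreading-model parameter depends on the cardinality $r$ (which is unbounded as $m$ ranges over the gaps of $\n$), so one must take care with the order of quantifiers when picking $j_1$.
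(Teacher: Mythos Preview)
Your approach is correct and follows essentially the same padding strategy as the paper: enlarge a greedy set of size $m$ to one of size $n\in\n$ by adjoining a block $B$ from the far tail of $(\be_{i_k})_k$, and use the $c_0$-spreading model to control $\|\bff_B\|$. The paper streamlines the execution by first restricting to $x$ with finite support (so one may take $B>\supp(x)$, forcing $P_B(x)=0$ and removing the need for your third and fourth tail conditions) and by using the smaller padding coefficient $\min_{i\in A}|\be_i^*(x)|$ in place of $\alpha_2+2\varepsilon$; the general case is then handled by the density argument of \cite[Corollary~2.3]{O2015}.
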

\begin{proof}
	Fix $x\in \X$ with finite support, $m\in \N$, and $A$ a greedy set for $x$ of cardinality $m$. Choose $\epsilon>0$, $n\in \mathbf{n}\cap [m+1,+\infty)$, and let 
	$$
	\Y:=\overline{[y_i:i\in \N]}.
	$$
	By the spreading property and the hypothesis on $(y_i)_i$, there is $k_0\in \N$ such that for each $B\subset\{i_k\}_k$ with $B>i_{k_0}$ and $|B|=n-m$, 
	\begin{equation}
	\|\bff_B\|\le (1+\epsilon)\|\sum\limits_{i=1}^{n-m}y_i\|_{\Y}\le (1+\epsilon)M. \label{farenough}
	\end{equation}
	Choose $B$ as above, with the additional condition that $B>\supp{(x)}$, and let 
	$$
	y:=x+\min_{i\in A}|\be_i^*(x)|\bff_B.
	$$
	As $A\cup B$ is a greedy set for $y$ with cardinality $n\in\mathbf{n}$, by the $\C_q$-$\mathbf{n}$-quasi-greedy property, \eqref{farenough} and the triangle inequality it follows that 
	\begin{eqnarray}
	\|P_A(x)\|&=&\|P_{A\cup B}(y)-\min_{i\in A}|\be_i^*(x)|\bff_B\|\le \|P_{A\cup B}(y)\| +\min_{i\in A}|\be_i^*(x)|\|\bff_B\|\nonumber\\
	&\le& \C_q\|y\|+\min_{i\in A}|\be_i^*(x)|\|\bff_B\|\nonumber\\
	&\le& \C_q\|x\|+(\C_q+1)\min_{i\in A}|\be_i^*(x)|\|\bff_B\|\nonumber\\
	&\le& \C_q\|x\|+(\C_q+1)\alpha_2(1+\epsilon)M\|x\|.
	\end{eqnarray}
	By letting $\epsilon\rightarrow 0$, the proof for $x$ with finite support is complete. The general case follows by  \cite[Corollary 2.3]{O2015}.
\end{proof}

\begin{corollary}\label{corollaryweaklynull}Assume that $\B$ is $\mathbf{n}$-quasi-greedy. If $\B$ is weakly null, it has a quasi-greedy subsequence. 
\end{corollary}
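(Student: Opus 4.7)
The plan is to prove this via a clean dichotomy on the spreading model of a well-chosen subsequence of $\B$, combining Proposition \ref{propositionlikec0} with \cite[Theorem 5.4]{DKK2003}.

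First, since $(\be_i)_i$ is weakly null and semi-normalized, the classical fact cited from \cite{AK2006} (and noted in the paragraph preceding the corollary) yields a subsequence $(\be_{i_k})_k$ whose spreading model $(y_i)_i$ is $1$-suppression unconditional. I then split the argument into two cases according to whether or not $(y_i)_i$ is equivalent to the unit vector basis of $\mathtt{c}_0$.

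In the first case, Proposition \ref{propositionlikec0} applies directly: because $\B$ is $\mathbf{n}$-quasi-greedy and the subsequence $(\be_{i_k})_k$ we just extracted has a spreading model $M$-equivalent to the $\mathtt{c}_0$-basis for some $M$, we conclude that $\B$ is already quasi-greedy. Since any subsequence of a quasi-greedy basic sequence remains quasi-greedy in its closed linear span (greedy sums of a vector supported on $\{i_k\}_k$ viewed inside $\overline{[\be_{i_k}:k\in\N]}$ are greedy sums in the ambient space $\X$, and the quasi-greedy inequality is inherited), the subsequence $(\be_{i_k})_k$ itself serves as a quasi-greedy subsequence of $\B$.

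In the second case, $(\be_{i_k})_k$ is a weakly null, semi-normalized basic sequence whose spreading model is not equivalent to the unit vector basis of $\mathtt{c}_0$. Then \cite[Theorem 5.4]{DKK2003} applies to $(\be_{i_k})_k$ and produces a further subsequence that is quasi-greedy (in fact, almost greedy); this further subsequence is a quasi-greedy subsequence of $\B$. The proof has no genuine obstacle beyond assembling the two inputs from the literature and Proposition \ref{propositionlikec0}; the only minor verification needed is the inheritance of quasi-greediness by subsequences in their closed linear span, which follows immediately from the fact that a greedy set for a vector supported on $\{i_k\}_k$ is automatically a greedy set in $\X$.
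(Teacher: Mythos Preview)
Your proof is correct and follows essentially the same dichotomy as the paper: extract a subsequence with an unconditional spreading model, then split according to whether that model is equivalent to the $\mathtt{c}_0$ basis, invoking Proposition~\ref{propositionlikec0} in one case and \cite[Theorem 5.4]{DKK2003} in the other. One small simplification: in your first case, once Proposition~\ref{propositionlikec0} gives that the original $\B$ is quasi-greedy, you are already done---$\B$ is its own quasi-greedy subsequence---so the inheritance argument for $(\be_{i_k})_k$ is unnecessary (and sidesteps having to check that the subsequence is a Markushevich basis for its span).
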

\begin{proof}
	By passing to a subsequence if necessary, we may assume that $\B$ is a Schauder basis with an unconditional spreading model $(y_i)_i$. If $(y_i)_i$ is equivalent to the unit vector basis of $\mathtt{c}_0$, then $\B$ is quasi-greedy by Proposition~\ref{propositionlikec0}. Else, $\B$ has a quasi-greedy subsequence by \cite[Theorem 5.4]{DKK2003}. 
\end{proof}
\section{The case of Quasi-Banach spaces}\label{section7}

Recently, in \cite{AABW, B}, the authors have extended the classical results of greedy-type bases to the context of quasi-Banach spaces studying how the lack of convexity affects the main results involving characterizations of quasi-greedy, almost greedy and greedy bases. As the topic of greedy-type bases in quasi-Banach spaces is being developed and there are several researchers focusing their attention on the context of quasi-Banach spaces, in this section we extend the main result of T. Oikhberg (that is, Theorem \ref{th}).

We use $\alpha$ to denote a constant for which $\|x+y\|\le \alpha\|x\|+\alpha\|y\|$ for all $x,y$ in a quasi-Banach space $\X$.  Also, for a semi-normalized basis $\B=(\be_i)_i$ with coordinate functionals $(\be_i^*)_i$, we define 
$$
c:=\sup\limits_{i\in \N}(1+\|\be_i\|)(1+\|\be_i^*\|). 
$$
\begin{remark}\label{remarkeasypart}Note that $c> 2$ and that for each finite set $A\subset \N$ and every $x\in \X$, 
	$$
	\|P_{A}(x)\|=\|\sum\limits_{i\in A}\be_i^*(x)\be_i\|\le \alpha^{|A|-1}\sum\limits_{i\in A}\|\be_i^*(x)\be_i\|\le \alpha^{|A|-1}|A|c\|x\|.
	$$
\end{remark}

First, we extend \cite[Lemma 2.2]{O2015} since is unknown in quasi-Banach spaces and the proof is a little bit different.

\begin{lemma}\label{lemma2.1extensiontrivial} Let $\B$ be a basis for a quasi-Banach space $\X$. If $A$ is a $t$-greedy set for $x$, for every $\epsilon>0$ there is $y$ with finite support such that $\|x-y\|\le \epsilon$ and $A$ is a $t$-greedy set for $x$. 
\end{lemma}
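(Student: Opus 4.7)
The plan is to use the density of the linear span of $\B$ to pick a finite-support approximant $z$ of $x$, and then build $y$ from $z$ in a way that keeps the coefficients on $A$ exactly equal to those of $x$ while damping the coefficients on $\supp(z)\setminus A$ just enough to preserve the $t$-greedy condition. Setting $m:=\min_{i\in A}|\be_i^*(x)|$ and $M:=\max_{i\notin A}|\be_i^*(x)|$, the case $M=0$ is trivial: by totality of the biorthogonal system (property~(iii)) one has $x=P_A(x)$, and $y:=P_A(x)$ is the required finite-support element; so from now on I assume $m\ge tM>0$.

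For fixed $\delta>0$ I would choose $z$ with finite support satisfying $\|x-z\|<\delta$, and set $B:=\supp(z)\cup A$ and $\mu:=\max_{i\in B\setminus A}|\be_i^*(z)|$. Continuity of the biorthogonal functionals (each of norm at most $c$) gives $\mu\le M+c\delta$. I then define the scaling factor $\lambda:=\min\{1,\,M/\mu\}\in[M/(M+c\delta),\,1]$ (with $\lambda:=1$ if $\mu=0$) and propose
\[
y:=P_A(x)+\lambda\,P_{B\setminus A}(z),
\]
which has finite support. By construction $|\be_i^*(y)|=|\be_i^*(x)|\ge m$ for $i\in A$, and for $i\notin A$ the coefficient $\be_i^*(y)$ is either zero or of the form $\lambda\be_i^*(z)$ with absolute value at most $\lambda\mu\le M$, so $m\ge tM$ immediately yields that $A$ is a $t$-greedy set for $y$.

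To control $\|x-y\|$, I would write $x-y=(I-P_A)(x-\lambda z)$ and combine Remark~\ref{remarkeasypart} with the quasi-triangle inequality to bound $\|x-y\|$ by $\alpha\bigl(1+\alpha^{|A|-1}|A|c\bigr)\|x-\lambda z\|$; the identity $x-\lambda z=(1-\lambda)x+\lambda(x-z)$ together with $1-\lambda\le c\delta/M$ then gives $\|x-\lambda z\|\le\alpha\bigl(c\|x\|/M+1\bigr)\delta$, so choosing $\delta$ small enough relative to $\epsilon$, $\|x\|$, $M$, $|A|$, $\alpha$ and $c$ yields $\|x-y\|\le\epsilon$. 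The main obstacle is the boundary case $m=tM$: there the $t$-greedy inequality holds with equality and does not tolerate \emph{any} enlargement of the off-$A$ coefficients, and the scaling factor $\lambda$ is exactly what keeps the perturbation from $x$ to $z$ from breaking the inequality in that case.
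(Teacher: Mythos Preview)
Your argument is correct and complete: the identity $x-y=(I-P_A)(x-\lambda z)$ holds because $P_{B\setminus A}(z)=z-P_A(z)$, the bound $1-\lambda\le c\delta/M$ is valid whenever $M>0$, and the $t$-greedy condition for $y$ follows since $\lambda\mu\le M$ by construction.

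The paper takes a different route. Instead of fixing the $A$-coefficients and damping the rest, it starts from the finite-support approximant $z$ and \emph{boosts} the $A$-coefficients by adding $2c\delta\sum_{i\in A}\sgn(\be_i^*(x))\be_i$. This raises $\min_{i\in A}|\be_i^*(y)|$ to at least $\beta+c\delta$ while the off-$A$ coefficients grow by at most $c\delta$, so the $t$-greedy inequality survives automatically, even in the boundary case $m=tM$. The paper's construction is uniform and avoids the case split on $M=0$ and the data-dependent factor $\lambda$; your construction, on the other hand, has the pleasant feature $P_A(y)=P_A(x)$ exactly, which can be convenient downstream. Both arguments are of comparable length and rely on the same ingredients (density of finitely supported vectors, the bound $|\be_i^*(x-z)|\le c\delta$, and Remark~\ref{remarkeasypart}), so this is a genuine but minor variation rather than a different idea.
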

\begin{proof}
	The proof is just the proof of Lemma~\cite[Lemma 2.1]{O2015}, with minor modifications. We may assume that $A\not=\emptyset$ and $x$ does not have finite support. Let 
	$$
	\delta:=\frac{\epsilon}{4c^2\alpha^{|A|}|A|},
	$$
	and choose $z$ with finite support so that 
	\begin{equation}
	\|x-z\|\le \delta. \nonumber%\label{ledelta}
	\end{equation}
	Let
	\begin{equation}
	y:=z+ 2 c \delta \sum\limits_{i\in A}\sgn{\be_i^*(x)}\be_i. \nonumber%\label{closeenough5}
	\end{equation}
	We have
	\begin{eqnarray}
	\|x-y\|&\le& \alpha\|x-z\|+\alpha\|z-y\|\le \frac{\epsilon}{4}+ 2c \delta\alpha\alpha^{|A|-1}\sum\limits_{i\in A}\|\sgn{\be_i^*(x)}\be_i\|\nonumber\\
	&\le&\frac{\epsilon}{4}+2c^2\delta\alpha^{|A|}|A|= \frac{\epsilon}{4}+\frac{\epsilon}{2}<\epsilon.\nonumber%\label{x-y}
	\end{eqnarray}
	Note that 
	\begin{numcases}{\be_i^*(y)=}
	\be_i^*(x)+\be_i^*(z-x) & if $i\not \in A$,\label{notinA}\\
	\left(|\be_i^*(x)|+2c\delta \right)\sgn{\be_i^*(x)}+\be_i^*(z-x) & if $i\in A$.\label{inA}
	\end{numcases}
	Let 
	$$
	\beta:=\min_{i\in A}|\be_i^*(x)|. 
	$$
	For $i\not \in A$, by \eqref{notinA} we have
	$$
	|\be_i^*(y)|\le |\be_i^*(x)|+|\be_i^*(z-x)|\le \frac{\beta}{t}+c\|z-x\|\le\frac{\beta+ct\delta}{t}\le \frac{\beta+c\delta}{t}.
	$$
	On the other hand, \eqref{inA} entails that for $i\in A$, 
	$$
	|\be_i^*(y)|\ge |\be_i^*(x)|+2c\delta -|\be_i^*(z-x)|\ge \beta+2c\delta-c\|z-x\|\ge \beta+c\delta.
	$$
	Thus, $A$ is a $t$-greedy set for $y$. 
\end{proof}

Thanks to this result, we have the following result that many times, in Banach spaces, is used in the literature as a ``small perturbations argument".
\begin{corollary}\label{corollaryfinite} Let $\B$ a basis for a quasi-Banach space $\X$. Suppose there is $0<t\le 1$ and $\C>0$ such that for all $x\in \X$ with finite support, if  $A$ is a $t$-greedy set for $x$ with $|A|\in \mathbf{n}$, then
	\begin{equation}
	\|P_A(x)\|\le \C\|x\|.\label{hypothesisfinite}
	\end{equation}
	Then, for all $x\in \X$, if  $A$ is a $t$-greedy set for $x$ with $|A|\in \mathbf{n}$, 
	\begin{equation}
	\|P_A(x)\|\le \alpha^2\C\|x\|.\label{hypothesisforall}
	\end{equation}
\end{corollary}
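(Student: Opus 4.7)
The plan is to use Lemma~\ref{lemma2.1extensiontrivial} as a ``small perturbations argument'' to transfer the inequality from the finite-support case to arbitrary $x\in\X$, paying only a quasi-Banach factor of $\alpha^2$.

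Fix $x\in\X$ and a $t$-greedy set $A$ for $x$ with $|A|\in\mathbf n$; we may assume $A\neq\emptyset$ (else the bound is trivial) and in particular $|A|\ge 1$. Given $\epsilon>0$, I would invoke Lemma~\ref{lemma2.1extensiontrivial} to produce $y\in\X$ with finite support such that $\|x-y\|\le\epsilon$ and $A$ is still a $t$-greedy set for $y$. Since $|A|\in\mathbf n$, the hypothesis \eqref{hypothesisfinite} applied to $y$ gives $\|P_A(y)\|\le\C\|y\|$.

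Now I would combine this with the quasi-triangle inequality. Writing $P_A(x)=P_A(y)+P_A(x-y)$ and using $\|u+v\|\le\alpha\|u\|+\alpha\|v\|$ gives
\begin{equation*}
\|P_A(x)\|\le\alpha\|P_A(y)\|+\alpha\|P_A(x-y)\|\le\alpha\C\|y\|+\alpha\|P_A(x-y)\|.
\end{equation*}
Applying the quasi-triangle inequality again to bound $\|y\|\le\alpha\|x\|+\alpha\|x-y\|\le\alpha\|x\|+\alpha\epsilon$, and using the crude estimate from Remark~\ref{remarkeasypart} to control $\|P_A(x-y)\|\le\alpha^{|A|-1}|A|c\|x-y\|\le\alpha^{|A|-1}|A|c\,\epsilon$, I obtain
\begin{equation*}
\|P_A(x)\|\le\alpha^2\C\|x\|+\alpha^2\C\epsilon+\alpha^{|A|}|A|c\,\epsilon.
\end{equation*}
Letting $\epsilon\to 0^+$ (with $|A|$ and $c$ fixed) yields the desired inequality $\|P_A(x)\|\le\alpha^2\C\|x\|$.

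There is no serious obstacle here: the only subtle point is that a naive perturbation could destroy the $t$-greedy property of $A$ for the approximant, but Lemma~\ref{lemma2.1extensiontrivial} is designed precisely to avoid that. The factor $\alpha^2$ in the conclusion is the minimal cost of applying the quasi-triangle inequality twice (once to split $P_A(x)$ and once to compare $\|y\|$ with $\|x\|$), and the dependence on $|A|$ in the perturbation estimate is harmless because $\epsilon$ can be taken arbitrarily small once $A$ is fixed.
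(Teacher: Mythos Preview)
Your proof is correct and follows essentially the same approach as the paper: both invoke Lemma~\ref{lemma2.1extensiontrivial} to approximate $x$ by a finitely supported $y$ for which $A$ remains $t$-greedy, apply the hypothesis to $y$, and then use the quasi-triangle inequality twice together with the crude bound of Remark~\ref{remarkeasypart} on $\|P_A(x-y)\|$. The only cosmetic difference is that the paper phrases the argument by contradiction with a specific choice of $\epsilon$, while you argue directly and let $\epsilon\to 0$; the underlying estimates are identical.
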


\begin{proof}
	Suppose the implication is false, so \eqref{hypothesisfinite} holds for all $x$ with finite support, but there is $x\in \X$ with infinite support and $A$ a $t$-greedy set for $x$ with $|A|\in\mathbf{n}$ such that
	\begin{equation*}
	\|P_A(x)\|>\alpha^2\C\|x\|. 
	\end{equation*}
	Let 
	\begin{equation}
	\delta=\|P_A(x)\|-\alpha^2\C\|x\|,\qquad \epsilon:=\frac{\delta}{4\alpha^{|A|+2}c|A|\C }.\label{forcontradiction}
	\end{equation}
	As $\epsilon>0$, by Lemma~\ref{lemma2.1extensiontrivial} there is $y\in \X$ with finite support such that $\|x-y\|\le \epsilon$ and $A$ is $t$-greedy set for $y$. By \eqref{hypothesisfinite}, \eqref{forcontradiction} and Remark~\ref{remarkeasypart} we have
	\begin{eqnarray}
	\|P_A(x)\|&\le& \alpha\|P_A(y)\|+\alpha\|P_A(x-y)\|\le \alpha \C\|y\|+\alpha\alpha^{|A|-1}|A|c\|x-y\|\nonumber\\
	&\le &\alpha \C(\alpha\|x\|+\alpha\|x-y\|)+\alpha^{|A|}|A|c\|x-y\|\nonumber\\
	&\le& \alpha^2\C\|x\|+\alpha^2\C\epsilon+\alpha^{|A|}|A|c\epsilon\nonumber\\
	&=&\alpha^2\C\|x\|+\frac{\alpha^2\C\delta}{4\alpha^{|A|+2}|A|Kc}+\frac{\alpha^{|A|}|A|c\delta}{4\alpha^{|A|+2}c|A|\C}\le \alpha^2\C\|x\|+\frac{\delta}{4}+\frac{\delta}{4}\nonumber\\
	&=&\alpha^2\C\|x\|+\frac{\delta}{2}=\|P_A(x)\|-\delta+\frac{\delta}{2}\nonumber\\
	&<&\|P_A(x)\|. \nonumber
	\end{eqnarray}
	As this is absurd, the proof is complete. 
\end{proof}

Now we prove a variant of \cite[Theorem 2.1]{O2015}. 

\begin{theorem}\label{theoremqbextension}Let $0<t\le 1$, $\mathbf{n}=(n_k)_{k\in\N}$ a strictly increasing sequence of positive integers, and $\B$ a basis for a quasi-Banach space $\X$. The following are equivalent: 
	\begin{enumerate}[\upshape (i)]
		\item \label{bound} There is $\C_1>0$ such that for all $x\in \X$, if  $A$ is a $t$-greedy set for $x$ with $|A|\in \mathbf{n}$, then
		\begin{equation}
		\|P_A(x)\|\le \C_1\|x\|.\nonumber
		\end{equation}
		\item \label{boundfinitesupport} There is $\C_2>0$ such that for all $x\in \X$ with finite support, if  $A$ is a $t$-greedy set for $x$ with $|A|\in \mathbf{n}$, then
		\begin{equation}
		\|P_A(x)\|\le \C_2\|x\|.\nonumber
		\end{equation}
		\item \label{boundfinite>m} There is $m\in \N$ and $\C_3>0$ such that for all $x\in\X$ with finite support, if $m<\supp{(x)}$ and $A$ is a $t$-greedy set for $x$ with $|A|\in \mathbf{n}$, then 
		\begin{equation}
		\|P_A(x)\|\le \C_3\|x\|. \nonumber
		\end{equation}
		\item \label{pointwisebounded} For each $x\in \X$ there is $\C_4(x)>0$ such that if $A$ is a $t$-greedy set for $x$ and $|A|\in \mathbf{n}$, then 
		\begin{equation}
		\|P_A(x)\|\le \C_4(x). \nonumber
		\end{equation}
		\item \label{ntqg} For any $x\in\X$ and any choice of $t$-greedy sums $\G_{n_i}^t(x)$,
		$$\lim_{i}\G_{n_i}^t(x)=x.$$
	\end{enumerate}
\end{theorem}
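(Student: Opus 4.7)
The plan is to prove the cycle (v) $\Rightarrow$ (iv) $\Rightarrow$ (iii) $\Rightarrow$ (ii) $\Rightarrow$ (i) $\Rightarrow$ (v). The implications (v) $\Rightarrow$ (iv) (since convergent sequences in a quasi-normed space are bounded) and (ii) $\Rightarrow$ (i) (which is Corollary~\ref{corollaryfinite}) are immediate, and record also the trivial implications (i) $\Rightarrow$ (ii) and (ii) $\Rightarrow$ (iii) (the latter with $m=0$), which close the set of equivalences once the cycle is established.

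For (iii) $\Rightarrow$ (ii), I would split and augment. Given $x$ of finite support and a $t$-greedy set $A$ for $x$ with $|A|\in\mathbf{n}$, write $A=A_1\cupdot A_2$ with $A_1\subset[1,m]$ and $A_2\subset(m,\infty)$. The term $\|P_{A_1}(x)\|$ is bounded by Remark~\ref{remarkeasypart} (using $|A_1|\le m$). For $A_2$, pick $B\subset(m,\infty)\setminus(\supp(x)\cup A)$ with $|B|=|A_1|$ and set
\[
y := P_{(m,\infty)}(x) + M\sum_{i\in B}\be_i,\qquad M:=\min_{i\in A}|\be_i^*(x)|.
\]
A direct check (using that $A$ is $t$-greedy for $x$) shows that $\tilde A_2:=A_2\cupdot B$ is a $t$-greedy set for $y$ with $|\tilde A_2|=|A|\in\mathbf{n}$ and $\supp(y)\subset(m,\infty)$; hypothesis (iii) then yields $\|P_{\tilde A_2}(y)\|\le\C_3\|y\|$. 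Expanding $P_{\tilde A_2}(y)=P_{A_2}(x)+M\sum_{i\in B}\be_i$ and using $M\le\alpha_2\|x\|$, $\bignorm{\sum_{i\in B}\be_i}\le\alpha^{m-1}m\alpha_1$, and $\|y\|\le C(m)\|x\|$ (again by Remark~\ref{remarkeasypart}) gives the desired estimate on $\|P_{A_2}(x)\|$, and hence on $\|P_A(x)\|$.

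The main obstacle is (iv) $\Rightarrow$ (iii), the quasi-Banach analogue of Wojtaszczyk's uniform boundedness argument. Arguing by contradiction: if (iii) fails, construct inductively a sequence $(x_k,A_k)$ with $x_k$ finitely supported, $\supp(x_k)$ strictly beyond $\max(A_{k-1}\cup\supp(x_{k-1}))$, $A_k$ a $t$-greedy set for $x_k$ with $|A_k|\in\mathbf{n}$, and $\|P_{A_k}(x_k)\|\ge C_k\|x_k\|$ for a sufficiently fast sequence $C_k\to+\infty$. Passing to an Aoki--Rolewicz $r$-norm on $\X$ to secure summability, one scales each $x_k$ so that $\sum_k x_k$ converges in $\X$ and each $A_k$ remains a $t$-greedy set for $x:=\sum_k x_k$; then $\|P_{A_k}(x)\|\to+\infty$ contradicts (iv). The delicate point is the careful calibration of the decay of $\|x_k\|$ against the lower bound on $\min_{i\in A_k}|\be_i^*(x_k)|$ needed to ensure the $t$-greedy condition with respect to $x$, which due to the lack of convexity requires more subtle $r$-norm bookkeeping than in the Banach case.

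Finally, (i) $\Rightarrow$ (v) follows by a standard density argument. Given $x\in\X$ and $\varepsilon>0$, choose a finitely supported $y=\sum_{i\in F}a_i\be_i$ with all $a_i\ne 0$ and $\|x-y\|$ small enough compared to $\mu:=\min_{i\in F}|a_i|$. Since $\be_i^*(x)\to 0$, for every $n\in\mathbf{n}$ sufficiently large and every $t$-greedy set $A$ for $x$ of cardinality $n$ necessarily $A\supseteq F$; hence $x-\G_n^t(x)=(x-y)-P_A(x-y)$, and combining the quasi-triangle inequality with the suppression form of (i) applied to $x-y$ yields $\|x-\G_n^t(x)\|\lesssim\varepsilon$, completing the proof.
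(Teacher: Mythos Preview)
Your outline for (iii)$\Rightarrow$(ii) is essentially the paper's argument (you put the minimum coefficient on the auxiliary set where the paper puts $2c\|x\|$, but both work), and (ii)$\Rightarrow$(i) via Corollary~\ref{corollaryfinite} is fine.

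The implication (iv)$\Rightarrow$(iii), however, does not go through as you describe. You propose to scale the blocks $x_k$ so that $x=\sum_k x_k$ converges \emph{and} each original $A_k$ is a $t$-greedy set for $x$. These two requirements are incompatible: convergence forces $\|x_k\|\to 0$, hence $\min_{i\in A_k}|\be_i^*(x_k)|\le \alpha_2\|x_k\|\to 0$, while for $A_k$ to be $t$-greedy for $x$ you would need
\[
\min_{i\in A_k}|\be_i^*(x_k)|\;\ge\; t\max_{j\notin A_k}|\be_j^*(x)|\;\ge\; t\max_{j\in\supp(x_1)}|\be_j^*(x_1)|,
\]
a fixed positive number, for every $k\ge 2$. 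So no calibration of the decay can save this. The paper's remedy is the key missing idea: one does not keep $A_{k}$ but replaces it by $D_{k}:=B_{k-1}\cupdot (A_{k}\setminus C_{k})$, where $B_{k-1}=\bigcup_{l<k}\supp(x_l)$ and $C_{k}$ consists of the $|B_{k-1}|$ smallest-coefficient indices in $A_{k}$. This forces $|D_{k}|=|A_{k}|\in\mathbf{n}$, makes $D_{k}$ genuinely $t$-greedy for $y=\sum_l x_l$ (all earlier, larger coefficients are now inside the set), and the careful growth conditions on $\|P_{A_k}(x_k)\|$ ensure that removing $P_{C_k}$ and adding $P_{B_{k-1}}$ still leaves $\|P_{D_k}(y)\|$ unbounded.

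There is a smaller gap in your (i)$\Rightarrow$(v). After writing $x-\G_n^t(x)=(x-y)-P_A(x-y)$ you invoke ``the suppression form of (i) applied to $x-y$'', but $A$ is \emph{not} $t$-greedy for $x-y$: on the indices $i\in F\subset A$ one has $|\be_i^*(x-y)|$ arbitrarily small (even zero), while $\max_{j\notin A}|\be_j^*(x-y)|=\max_{j\notin A}|\be_j^*(x)|$ need not vanish. One correct route (essentially Oikhberg's) is to observe that $A\setminus F$ \emph{is} $t$-greedy for $x-P_F(x)$ but has the wrong cardinality, and then to augment by a small-coefficient set of size $|F|$ outside $\supp(x)$ to restore cardinality in $\mathbf{n}$ before applying (i); the extra terms vanish as $n\to\infty$ because $\max_{j\notin A}|\be_j^*(x)|\to 0$.
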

\begin{proof}
	\eqref{bound}$\Longrightarrow$ \eqref{boundfinitesupport} $\Longrightarrow$ \eqref{boundfinite>m},  \eqref{bound}$\Longrightarrow$ \eqref{pointwisebounded} and  \eqref{ntqg}$\Longrightarrow$ \eqref{pointwisebounded} are immediate.\\
	\eqref{boundfinitesupport} $\Longrightarrow$ \eqref{bound} follows by Corollary~\ref{corollaryfinite}.\\
	\eqref{bound} $\Longrightarrow$ \eqref{ntqg} is proven by the argument given in the proof of \cite[Theorem 2.1]{O2015}, with only minor adjustments needed due to the fact that $\X$ has a quasi-norm instead of a norm. \\
	
	\eqref{boundfinite>m}$\Longrightarrow$ \eqref{boundfinitesupport}. Choose $m, \C_3$ as in \eqref{boundfinite>m}, and let $B:=\{1,\dots,m\}$.
	Note that for every set $D\subset \N$ with $0<|D|\le m$ and every $x\in \X$, by Remark~\ref{remarkeasypart} we have
	\begin{eqnarray}
	\|P_{D}(x)\|\le m\alpha^m c\|x\|. \label{easypart}
	\end{eqnarray}
	Fix $x\not=0$ and $A$ a $t$-greedy set for $x$ with $|A|\in \mathbf{n}$. If $|A|\le m$, \eqref{easypart} already gives an upper bound for the projection, so we may assume $|A|>m$. If $A\cap B=\emptyset$, then $A$ is a $t$-greedy set for $x-P_B(x)$. Thus, as $m<\supp{(x-P_B(x))}$, by hypothesis and \eqref{easypart} it follows that 
	\begin{eqnarray*}
		\|P_A(x)\|&=&\|P_A(x-P_B(x))\|\le \C_3\|x-P_B(x)\|\le \C_3\alpha \|x\|+\C_3\alpha\|P_B(x)\|\\
		&\le&  \C_3\alpha(1+m\alpha^mc)\|x\|. 
	\end{eqnarray*}
	On the other hand, if $A\cap B\not=\emptyset$, choose $D>A\cup B$ with $|D|=|A\cap B|$, and define 
	$$
	y:=x-P_B(x)+2c\|x\|\bff_{D}.
	$$
	For all $i\in D$ and all $j\not \in D$,
	\begin{equation}
	|\be_j^*(y)|=|\be_j^*(x-P_B(x))|\le |\be_j^*(x)\|\le c\|x\|\le |\be_i^*(y)|\le t^{-1}|\be_i^*(y)|. \label{notinD}
	\end{equation}
	For all $i\in A\setminus B$ and $j\not \in A \cup B\cup D$, 
	\begin{equation}
	|\be_j(y)|= |\be_j(x)|\le t^{-1}|\be_i(x)|=t^{-1}|\be_i(y)|.\label{notinBAminusBD}
	\end{equation}
	Finally, for $i\in A\setminus B$ and $j \in B$,
	\begin{equation}
	|\be_j(y)|=0\le t^{-1}|\be_i(y)|.\label{thirdone}
	\end{equation} 
	It follows from \eqref{notinD}, \eqref{notinBAminusBD} and \eqref{thirdone} that if $i\in (A\setminus B)\cup D$ and $j\not \in (A\setminus B)\cup D$, then
	$$
	|\be_j(y)|\le t^{-1}|\be_i(y)|.
	$$
	Thus, $(A\setminus B)\cup D$ is a $t$-greedy set for $y$. As $|(A\setminus B)\cup D|=|A|\in \mathbf{n}$ and $m<\supp{(y)}$, by hypothesis and \eqref{easypart} we have 
	\begin{eqnarray}
	\|P_{(A\setminus B)\cup D}(y)\|&\le& \C_3\|y\|=\C_3\big|\big|x-P_B(x)+2c\|x\|\bff_{D}\big|\big|\nonumber\\
	&\le& \C_3\alpha^2(\|x\|+\|P_B(x)\|+2c\|x\|\|\bff_{D}\|)\nonumber\\
	&\le& \C_3\alpha^2(1+m\alpha^{m}c+2c\alpha^{|D|-1}|D|c)\|x\|\nonumber\\
	&\le &4 m\C_3\alpha^{m+2}c^2\|x\|. \label{addingD}
	\end{eqnarray}
	From this and \eqref{addingD} we get
	\begin{eqnarray}
	\|P_{(A\setminus B)}(x)\|&=&\|P_{(A\setminus B)}(y)\| \le \alpha \|P_{(A\setminus B)\cup D}(y)\|+\alpha \|P_D(y)\|\nonumber\\
	&=& \alpha \|P_{(A\setminus B)\cup D}(y)\|+\alpha\big|\big|P_D(x)+2c\|x\|\bff_{D}\big|\big|\nonumber\\
	&\le& \alpha \|P_{(A\setminus B)\cup D}(y)\|+\alpha^2\|P_D(x)\|+2c\alpha^2\|\bff_D\|\|x\|\nonumber\\
	&\le&  \alpha \|P_{(A\setminus B)\cup D}(y)\|+\alpha^{2}m\alpha^mc\|x\|+2c\alpha^2\alpha^{|D|-1}|D|c\|x\|\nonumber\\
	&\le &7m\C_3\alpha^{m+3}c^2\|x\|.\nonumber%\label{part2}
	\end{eqnarray}
	Combining this result with \eqref{easypart} we obtain
	$$
	\|P_A(x)\|\le \alpha\|P_{A\cap B}(x)\|+\alpha\|P_{A\setminus B}(x)\|\le (m\alpha^{m+1}c+7m\C_3\alpha^{m+3}c^2)\|x\|.
	$$
	\eqref{pointwisebounded} $\Longrightarrow$ \eqref{boundfinite>m}. Suppose the implication is false. Then, there is $x_1$ with finite support and $A_1$ a $t$-greedy set for $x_1$ with $|A_1|\in\mathbf{n}$ such that
	$$
	\|x_1\|\le \frac{1}{10\alpha c} \qquad \text{and}\qquad \|P_{A_1}(x_1)\|>10\alpha.
	$$
	Let $m_0=0$ and $m_1:=\max\supp{(x_1)}+1$. As \eqref{boundfinite>m} does not hold, there is $x_2$ with finite support and a $t$-greedy set $A_2$ for $x_2$ with $|A_2|\in \n$ such that the following hold: 
	\begin{eqnarray}
	m_1&<&\supp{(x_2)},\nonumber\\%\label{awayfromx1}\\
	\|x_2\|&\le& \frac{\min_{j\in \supp{(x_1)}}\{|\be_j^*(x_1)|\}}{10^2m_1\alpha^{m_1+1}c}\nonumber\\%\label{firstconditions}\\
	\|P_{A_2}(x_2)\|&>&(10\alpha)^2(1+\alpha^{m_1}m_1c).\nonumber%\label{largeprojection}
	\end{eqnarray}
	By an inductive argument, we can choose a sequence of elements with finite support $(x_i)_i\subset \X$ and corresponding $t$-greedy sets $(A_i)$ with $|A_i|\in \mathbf{n}$, and a strictly increasing scalar sequence $(m_i)_{i}$ such that, for all $i$,  
	\begin{eqnarray}
	m_i&<&\supp{(x_{i+1})}< m_{i+1},\label{awayfromx1i}\\
	\|x_{i+1}\|&\le& \frac{\min_{1\le l\le i} \min_{j\in \supp{(x_l)}}\{|\be_j^*(x_l)|\}}{10^{i+1}m_i\alpha^{m_i+1}c}\label{firstconditionsi}\\
	\|P_{A_{i+1}}(x_{i+1})\|&>&\alpha^{m_i+1}m_ic+1.\label{largeprojectioni}
	\end{eqnarray}
	It follows from \eqref{firstconditionsi} that if $1\le l<i$, then
	\begin{equation}
	\max_{k\in \supp{(x_i)}}|\be_{k}^*(x_{i})|\le c \|x_{i+1}\|<\min_{k\in \supp{(x_l)}}|\be_{k}^*(x_{l})|.\label{smaller5}
	\end{equation}
	For each $n\in \N$, let 
	$$
	y_n:=\sum\limits_{i=1}^{n}x_i\qquad \text{and} \qquad B_{n}:=\supp{(y_n)}. 
	$$
	Note that for every $j\in \N$, $\|x_j\|\le 1$. Thus, for every $i\in \N$ and every $A\subset \supp{(x_{i+1})}$ such that $|A|\le m_i$, by Remark~\ref{remarkeasypart} we have
	\begin{equation}
	P_A(x_{i+1})\le \alpha^{|A|-1}|A|c\|x_{i+1}\|\le \alpha^{m_i}m_ic.\nonumber%\label{smallprojections}
	\end{equation}
	It follows from this, \eqref{awayfromx1i} and \eqref{largeprojectioni}  that 
	\begin{equation}
	|A_{i+1}|>m_i>|B_i|. \label{largeenough5}
	\end{equation}
	Thus, for every $i$, there is $C_{i+1}\subset A_{i+1}$ such that $|C_{i+1}|=|B_i|$ and, for each $j\in C_{i+1}$ and each $l\in A_{i+1}\setminus C_{i+1}$, 
	\begin{equation}
	|\be_j^*(x_{i+1})|\le |\be_l^*(x_{i+1})|.\label{stilltgreedy}
	\end{equation}
	Let
	$$
	D_{i+1}=B_i\cupdot (A_{i+1}\setminus C_{i+1}). 
	$$
	As $|C_{i+1}|=|B_i|$, it follows that $|D_{i+1}|=|A_{i+1}|\in \mathbf{n}$ for all $i$. Let 
	$$
	y:=\lim_{n\to +\infty}y_n. 
	$$
	By our choice of $x_1$ and \eqref{firstconditionsi}, inductively it follows that $|\be_j^*(x_i)|\le 1$ for all $i,j$. Hence, again by \eqref{firstconditionsi}, $y$ is well defined. We claim that for all $i$, $D_{i+1}$ is a $t$-greedy set for $y$. To prove this claim, fix $i\in \N$, $j\in D_{i+1}$, and $n\in \supp{(y)}\setminus D_{i+1}$. As 
	$$
	\supp{(y)}=\bigcup_{k}\supp{(x_k)}
	$$
	and these supports are pairwise disjoint, there are unique $k_0, k_1\in \N$ such that 
	$$
	j\in \supp{(x_{k_0})}\qquad \text{and} \qquad n\in \supp{(x_{k_1})}.
	$$
	Since $n\not \in D_{i+1}$, $j\in D_{i+1}$ and 
	$$
	\bigcup_{1\le k\le i}\supp{(x_k)}=B_i\subset D_{i+1}=B_i\cup (A_{i+1}\setminus C_{i+1})\subset B_{i}\cup \supp{(x_{i+1})},
	$$
	it follows that $k_0\le i+1\le k_1$. Now if $k_0<k_1$ it follows by \eqref{smaller5} that 
	$$
	|\be_{n}^*(y)|=|\be_n^*(x_{k_1})|<|\be_j^*(x_{k_0})|=|\be_j^*(y)|.
	$$
	On the other hand, if $k_0=k_1=i+1$, either $n\in C_{i+1}$ or $n\not \in A_{i+1}$. As $j\in A_{i+1}\setminus C_{i+1}$, in the first case by \eqref{stilltgreedy} we obtain 
	$$
	|\be_{n}^*(y)|=|\be_n^*(x_{i+1})|\le |\be_j^*(x_{i+1})|=|\be_j^*(y)|, 
	$$
	whereas in the second case, the fact that $A_{i+1}$ is a $t$-greedy set for $x_{i+1}$ entails that 
	$$
	|\be_{n}^*(y)|=|\be_n^*(x_{i+1})|\le t^{-1}|\be_j^*(x_{i+1})|=|\be_j^*(y)|. 
	$$
	Since all cases have been considered, our claim is proven. Therefore,  by hypothesis, there is $\C_4(y)>0$ such that 
	\begin{equation}
	\|P_{D_{i+1}}(y)\|\le \C_4(y),\label{boundforPDi+1}
	\end{equation}
	for all $i\in \N$. As $|C_{i+1}|=|B_i|< m_1$ by \eqref{largeenough5} and $C_{i+1}\subset A_{i+1}\subset\supp{(x_{i+1})}$, by \eqref{firstconditionsi} and Remark~\ref{remarkeasypart} we have
	\begin{equation}
	\|P_{C_{i+1}}(y)\|=\|P_{C_{i+1}}(x_{i+1})\|\le \alpha^{|C_{i+1}|-1}|C_{i+1}|c\|x_{i+1}\|\le m_1\alpha^{m_1}c\|x_{i+1}\|\le 1. \label{boundforCi+1}
	\end{equation}
	Also, for all $i$, 
	\begin{equation}
	\|P_{B_{i}}(y)\|=\|y_i\|\le \sum\limits_{j=1}^{i-1}\alpha^{j} \|x_j\|+\alpha^{i-1}\|x_i\|\le \sum\limits_{j=1}^{i-1}\frac{1}{10^j}+\frac{1}{10^i}\le 1.\label{anotherone}
	\end{equation}
	Since $P_{A_{i+1}}(y)=P_{A_{i+1}}(x_{i+1})$ and $c> 2$, from \eqref{largeprojectioni}, \eqref{boundforPDi+1}, \eqref{boundforCi+1} and \eqref{anotherone} it follows that 
	\begin{eqnarray}
	\C_4(y)&\ge& \|P_{D_{i+1}}(y)\|=\|P_{B_i}(y)+P_{A_{i+1}}(y)-P_{C_{i+1}}(y)\|\nonumber\\
	&\ge& \alpha^{-1}\|P_{A_{i+1}}(y)\|-\|P_{B_{i}}(y)-P_{C_{i+1}}(y)\|\nonumber\\
	&\ge&\alpha^{-1}\|P_{A_{i+1}}(y)\|-\alpha\|P_{B_i}(y)\|-\alpha\|P_{C_{i+1}}(y)\|\nonumber\\
	&\ge& \alpha^{-1}(\alpha^{m_i+1}m_ic+1)-2\alpha\ge 2\alpha(m_i-1)\nonumber
	\end{eqnarray}
	for all $i\in \N$, which is impossible.
\end{proof}
\begin{remark}\rm In the particular case of $t=1$ and $\mathbf{n}=\N$,  Theorem~\ref{theoremqbextension} gives an alternative proof of a known result for quasi-greedy bases in quasi-Banach spaces. 
\end{remark}

\section{Open question}\label{section8}

\begin{question} Theorem \ref{theoremboundedgapsv2} is proven under the condition of Schauder bases. Can the Schauder condition be removed and the result be obtained using only the Markushevich condition?
\end{question}


\begin{thebibliography}{99}


\bibitem{AA2017} \textsc{F. Albiac and J. L. Ansorena}, \emph{Characterization of 1-quasi-greedy bases}, J. Approx. Theory {\bf 201} (2016), 7--12.


\bibitem{AABW} 
\textsc{F. Albiac, J. L. Ansorena, P. M. Bern\'a, P. Wojtaszczyk},
\emph{Greedy approximation for biorthogonal systems in quasi-Banach spaces}. To appear in Dissertationes Math.
\bibitem{AK2006} \textsc{F. Albiac, N. J. Kalton}. Topics in Banach Space Theory. Graduate Text in Mathematics, 233. Springer, New York, 2006.

\bibitem{AW} 
\textsc{F. Albiac and P. Wojtaszczyk},
\emph{Characterization of 1-greedy bases}.
J. Approx. Theory 138 (2006), 65--86.
%
\bibitem{B}
\textsc{P. M. Bern\'a}, \emph{A note on partially-greedy bases in quasi-Banach spaces}. To appear in Studia Math. DOI: 10.4064/sm200402-20-8


\bibitem{BBG}
\textsc{P. M. Bern\'a, O. Blasco, G. Garrig\'os},
\emph{Lebesgue inequalities for greedy algorithm in general bases},
Rev. Mat. Complut.  {\bf 30} (2017), 369--392.

	\bibitem{BBGHO} \textsc{P. M. Bern\'a, \'O. Blasco, G. Garrig\'os, E. Hern\'andez, T. Oikhberg},
\emph{Embeddings and Lebesgue-Type Inequalities for the Greedy Algorithm in Banach Spaces}, Constr. Approx. \textbf{48} (2018), no. 3, 415-451.


\bibitem{DKK2003} \textsc{S. J. Dilworth, N. J. Kalton, D. Kutzarova}. \emph{On the existence of almost greedy bases in Banach spaces}. Studia Math.  159  (2003), 67--101.

\bibitem{DKKT}
\textsc{S.J. Dilworth, N.J. Kalton, D. Kutzarova, and V.N.
Temlyakov}, \emph{The Thresholding Greedy Algorithm, Greedy Bases,
and Duality}, Constr. Approx. {\bf 19} (2003), 575--597.

\bibitem{DK}
\textsc{S.J. Dilworth, D. Kutzarova, P. Wojtaszczyk}, \emph{On Approximate $\ell_1$ Systems in Banach spaces}, J. Approx. Theory {\bf 114} (2002), 214--241.


\bibitem{DKOSZ}
\textsc{S. J. Dilworth, D. Kutzarova, E. Odell, Th. Schlumprecht and A. Zs\'ak}, \emph{Renorming spaces with greedy bases}. 
J. Approx. Theory, 188 (2014), 39–56.

\bibitem{DKO}
\textsc{S.J. Dilworth,  D. Kutzarova, T. Oikhberg}, \emph{Lebesgue constants for the weak greedy algorithm}, Rev. Matem. Compl. 28 (2) (2015), 393--409.

%
\bibitem{DKSWo2012} \textsc{S. J. Dilworth, D. Kutzarova, Th. Schlumprecht, P. Wojtaszczyk}.  \emph{Weak thresholding greedy algorithms in Banach spaces}. J. Funct. Anal. 263  (2012), 3900--3921.



\bibitem{Ha} \textsc{P. Hajek, V. Montesinos-Santalucía, J. Vanderwerff, V. Zizler}. Biorthogonal Systems in Banach Spaces. Springer, New York (2008)
\bibitem{KT} 
\textsc{S.V. Konyagin, V.N. Temlyakov}, \emph{A remark on greedy
approximation in Banach spaces}, East. J. Approx. 5, (1999),
365--379.


\bibitem{O2015} \textsc{T. Oikhberg}, \emph{Greedy algorithm with gaps}, J. Approx. Theory {\bf 225} (2018), 176--190. 

\bibitem{Tem}
\textsc{V. N. Temlyakov}, \emph{The best $m$-term approximation
and greedy algorithms}. Adv. Comput. Math. 8 (1998), 249--265.

	\bibitem{Tem2}
\textsc{V. N. Temlyakov}, \emph{Nonlinear $m$-term approximation
	with regard to the multivariate Haar system}, East J. Approx., 4,
(1998), 87--106.

\bibitem{Wo} \textsc{P. Wojtaszczyk},
\emph{Greedy Algorithm for General Biorthogonal Systems}, Journal of
Approximation Theory, 107, (2000), 293--314.

\bibitem{Z} \textsc{A. Zygmund},
\emph{Trigonometric Series}, Vol. I, Cambridge University Press, 1959.


\end{thebibliography}
\end{document}